\newtheorem{theorem}{Theorem}[section]
\newtheorem{remark}[theorem]{Remark}
\newtheorem{corollary}[theorem]{Corollary}
\newtheorem{proposition}[theorem]{Proposition}
\newtheorem{definition}[theorem]{Definition}
\newtheorem{openproblem}[theorem]{Open Problem}
\newcommand{\eps}{\varepsilon}
\newcommand\Prob{{\mathcal P}}
\newcommand\cOmega{\overline{\Omega}}
\newcommand\sfera{{{\mathbb S}^1}}
\newcommand\cil{\cOmega\times\sfera}
\newcommand{\R}{\mathbb{R}} % sets
\newcommand{\N}{\mathbb{N}}
\newcommand{\Si}{\Sigma}
\newcommand{\Om}{\Omega}
\newcommand{\Ots}{\overline{\Omega}\times\mathbb{S}^1}
\newcommand\Qt{{\mathcal Q}_t}
\newcommand{\vari}{\mathcal{V}_1(\overline{\Omega})}
\newcommand{\weak}{\rightharpoonup}
\newcommand{\cH}{\mathcal{H}^1}
\newcommand{\cHa}{\mathcal{H}^1_{A}}
\newcommand{\Flp}{F_L}
\newcommand{\Fp}{F_\infty}
\newcommand{\sobn}{C}
\newcommand{\spq}{c_{p,q}}
\DeclareMathOperator*{\esssup}{ess\,sup}
\newdimen\mex
\def\niv{\mathrel{\hbox{\hglue -0.4\mex
\vrule \@height 1.4\mex \@width .14\mex
\vrule \@height .14\mex \@width .75\mex
\hglue -0.2\mex}}}
\newcommand{\rectangleA}{

\draw[-, line width=0.7pt] (0,0)--(0,10); %RETTANGOLI
\draw[-, line width=0.7pt] (0,10)--(10,10);
\draw[-, line width=0.7pt] (10,10)--(10,0);
\draw[-, line width=0.7pt] (10,0)--(0,0);
\draw[-, line width=0.7pt] (0,2)--(10,2);
\draw[-, line width=0.7pt] (0,7)--(10,7);

\draw[-,very thin] (0,9.3)--(0.5,10);
\draw[-,very thin] (1,7)--(3,10);
\draw[-,very thin] (3.5,7)--(5.5,10);
\draw[-,very thin] (6,7)--(8,10);
\draw[-,very thin] (8.5,7)--(10,9.2);

\draw[-,very thin] (0,3)--(2.375,2);
\draw[-,very thin] (0,4.5)--(5.94,2);
\draw[-,very thin] (0,6)--(9.5,2);
\draw[-,very thin] (1.187,7)--(10,3.29);
\draw[-,very thin] (4.75,7)--(10,4.79);
\draw[-,very thin] (8.31,7)--(10,6.29);

\draw[-,very thin] (0,1.5)--(1.5,0);
\draw[-,very thin] (1.5,2)--(3.5,0);
\draw[-,very thin] (3.5,2)--(5.5,0);
\draw[-,very thin] (5.5,2)--(7.5,0);
\draw[-,very thin] (7.5,2)--(9.5,0);
\draw[-,very thin] (9.5,2)--(10,1.5);
}
\newcommand{\rectangleB}{

\draw[-, line width=0.7pt] (0,0)--(0,10); %RETTANGOLI
\draw[-, line width=0.7pt] (0,10)--(10,10);
\draw[-, line width=0.7pt] (10,10)--(10,0);
\draw[-, line width=0.7pt] (10,0)--(0,0);
\draw[-,line width=0.7pt] (0,1)--(10,1);
\draw[-,line width=0.7pt] (0,4)--(10,4);
\draw[-,line width=0.7pt] (0,8)--(10,8);

\draw[-,very thin] (0,0)--(0,1);
\draw[-,very thin] (2,0)--(2,1);
\draw[-,very thin] (4,0)--(4,1);
\draw[-,very thin] (6,0)--(6,1);
\draw[-,very thin] (8,0)--(8,1);

\draw[-,very thin] (0,1.5)--(10,1.5);
\draw[-,very thin] (0,2)--(10,2);
\draw[-,very thin] (0,2.5)--(10,2.5);
\draw[-,very thin] (0,3)--(10,3);
\draw[-,very thin] (0,3.5)--(10,3.5);

\draw[-,very thin] (0,4)--(1,8);
\draw[-,very thin] (2,4)--(3,8);
\draw[-,very thin] (4,4)--(5,8);
\draw[-,very thin] (6,4)--(7,8);
\draw[-,very thin] (8,4)--(9,8);

\draw[-,very thin] (0.5,8)--(0,9);
\draw[-,very thin] (3.5,8)--(2,10);
\draw[-,very thin] (6.5,8)--(5,10);
\draw[-,very thin] (9.5,8)--(8,10);
}
\title[Dirichlet conditions in Poincar\'e-Sobolev inequalities]{Dirichlet conditions in Poincar\'e\,-\,Sobolev inequalities: the sub\,-\,homogeneous case}
\author{Davide Zucco}
\address{Davide Zucco, Istituto Nazionale di Alta Matematica, Unit\`a di Ricerca del Dipartimento di Scienze Matematiche, Politecnico di Torino, Italy}
\email{davide.zucco@polito.it}
\begin{document}

\begin{abstract}
We investigate the dependence of optimal constants in Poincar\'e-Sobolev inequalities of planar domains on the region where the Dirichlet condition is imposed. More precisely, we look for the best Dirichlet regions, among closed and connected sets with prescribed total length $L$ (one-dimensional Hausdorff measure), that make these constants as small as possible. We study their limiting behaviour, showing, in particular, that Dirichlet regions homogenize inside the domain with comb-shaped structures, periodically distribuited at different scales and with different orientations. To keep track of these information we rely on a $\Gamma$-convergence result in the class of varifolds. This also permits applications to reinforcements of anisotropic elastic membranes. At last, we provide some evidences for a conjecture.
\end{abstract}

\maketitle

\section{Introduction}

We investigate the dependence of optimal constants in Poincar\'e-Sobolev inequalities of planar domains on the region where the Dirichlet condition is imposed. Precisely, we assume that are given:
\begin{itemize}
\item[-] two real numbers $p,q$ such that $1<p<+\infty$ and $1\leq q<p$;
\item[-] a bounded domain $\Omega\subset\R^2$ with a Lipschitz boundary $\partial\Omega$;
\item[-] a scalar continuous function $f$ positive over $\overline \Omega$;
\item[-] a $2\times 2$ symmetric, positive definite, matrix-valued function $A(x)=[a_{ij}(x)]$, defined for every $x\in\overline{\Omega}$ and continuous there. This implies 
in particular that $A(x)$ is \emph{uniformly elliptic}, i.e. 
\begin{equation}
\label{uniform}
\kappa_0 |y|^2\leq A(x) y\cdot y \leq \kappa_1|y|^2\quad
\forall y\in\R^2,\quad\forall x\in\overline{\Omega},
\end{equation}
for some constants $0<\kappa_0\leq \kappa_1$. For every $x\in \overline \Omega$ we denote by $a_{\min}(x)$ and $a_{\max}(x)$ the corresponding eigenvalues of $A(x)$.
\end{itemize}
Within this framework, for every set $\Sigma\subset \overline \Omega$ with \emph{positive $p$-capacity} the
following \emph{Poincar\'e-Sobolev inequality} holds (see, for instance, \cite[Chapter~10]{maz} and \cite[Corollary~4.5.2]{zie}): there exists a constant $C>0$ (possibly depending on $p$, $q$, $\Omega$, $f$, $A$ and $\Sigma$) such that
\begin{equation}\label{sobolev}
\Big(\int_{\Om} f(x) u(x)^q dx\Big)^{\frac{p}{q}} \leq C\int_{\Om} |A(x)\nabla u(x)\cdot \nabla u(x)|^{\frac{p}{2}}dx, \quad \text{for all $u\in W_\Sigma(\Omega)$},
\end{equation}
where the class of functions
\[
W_\Sigma(\Omega):=\{u\in W^{1,p}(\Omega)\,| \, u= 0 \text{ on } \Si\}.
\]
The assumption that $\Sigma$ has positive $p$-capacity makes the \emph{Dirichlet condition} along $\Sigma$ meaningful in $W^{1,p}(\Omega)$. It is necessary to anchor the functions somewhere inside $\overline\Omega$, otherwise \eqref{sobolev} would be false, violated by constant functions.
Then the smallest constant $C$ that one can use in \eqref{sobolev} can be defined as 
\begin{equation}
\label{PS}
\sobn(\Omega,\Sigma)
:= \max_{u\in W_\Sigma(\Omega)\setminus \{0\}} \frac{\left(\int_{\Om}f(x)|u(x)|^q dx\right)^{\frac pq}}{\int_{\Om} |A(x)\nabla u(x)\cdot\nabla u(x)|^{\frac{p}{2}}dx}.
\end{equation}
Henceforth we will refer to \eqref{PS} as the \emph{Poincar\'e-Sobolev constant} and we will focus on its \emph{shape} dependence; this is the reason why in \eqref{PS} we just emphasize the domain $\Omega$ and the Dirichlet region $\Sigma$. Moreover, $C(\Omega)$ will stand for $C(\Omega,\partial \Omega)$ (i.e., $\Sigma=\partial \Omega$), whenever this will do not give rise to misunderstandings. Notice  that when $\Sigma=\partial \Omega$, then $W_\Sigma(\Omega)=W_0^{1,p}(\Omega)$, namely the Sobolev space with zero trace condition along the boundary $\partial\Omega$.

Via the Poincar\'e-Sobolev inequality \eqref{sobolev} one can prove the existence of a maximizer for \eqref{PS} solving (in a weak sense) the following \emph{anisotropic $p$-Laplace} (see \cite{alesig}) nonlinear eigenvalue problem 
\begin{equation*}
\begin{cases}
-\mathrm{div}(|A\nabla u\cdot \nabla u|^{\frac{p-2}{2}}A\nabla u)={\lambda}f\|u\|_{q}^{p-q} |u|^{q-2}u,\quad &\text{in $\Omega$}\\
u=0  \quad &\text{on $\Sigma$}
\end{cases}
\end{equation*}
with $\lambda=1/C(\Omega,\Sigma)$. Notice that, if for a certain $\lambda$ there exist non-trivial solutions of the previous system, then there must hold $\lambda\geq 1/C(\Omega,\Sigma)$. These considerations suggest that $C(\Omega,\Sigma)$ is a \emph{principal period}, i.e., the reciprocal {principal frequency}  of the system (see \cite{fralam} for an analysis of these nonlinear eigenvalue problems and \cite{bra1,bra2,bra3,mazzuc} for related shape optimization problems). The label \emph{sub-homogeneus} then comes from the fact that when $q<p$ the principal frequency rescales with a power under the \emph{homogeneous} case $p=q$ (see Remark~\ref{r.scaling} below).

For fixed $\Omega$, we look for the best shape and location of the {Dirichlet region} $\Sigma$ that makes as small as possible the Poincar\'e-Sobolev constant \eqref{PS}. Contrary to more classical shape optimization problems, where the Dirichlet condition is prescribed along the boundary $\partial \Omega$, here the Dirichlet region $\Sigma$ is \emph{decoupled} from the domain $\Omega$. The variational problem that we introduce is then the following:
given  $L>0$
\begin{equation}\label{problem}
 \min \left\{\sobn(\Om,\Sigma): \; \text{$\Sigma\subset\overline \Omega$ closed, connected, and $\cH(\Sigma)\leq L$}\right\},
\end{equation}
where $\cH$ denotes the one-dimensional Hausdorff measure. 

A remarkable application to the reinforcement of an anisotropic elastic membrane is obtained by letting $q=1$ in \eqref{PS} (usually, in this case, the Poincar\'e-Sobolev constant $C(\Omega,\Sigma)$ is called \emph{$p$-torsional rigidity}).
It is well known (see, e.g., \cite[Proposition 2.2]{bra1}) that when $q=1$ the quantity $C(\Omega,\Sigma)$ can be written as the $(p-1)$-power of the $p$-\emph{compliance}: 
\[
\sobn(\Omega,\Sigma)=\left(\int_\Omega f(x) u_\Sigma(x) dx\right)^{p-1},
\]
where the state function $u_\Sigma$ is the unique solution of the variational problem 
\[
\min_{u\in W_\Sigma(\Omega)}  \int_{\Om} |A(x)\nabla u(x)\cdot\nabla u(x)|^{\frac{p}{2}}dx- p \int_{\Om}f(x)u(x) dx.
\]
Therefore, we have a model for the deflection of a structure (e.g. a membrane over $\Omega$ with an anisotropic Young modulus driven by the matrix $A$) subjected to a force $f$ acting in the vertical direction, and glued to the ground along $\Sigma$. Indeed, the $p$-compliance represents the work done by the force $f$ at the equilibrium: hence, the {smaller} the compliance the more rigid the structure. This makes physically interesting the study of \eqref{problem} (observe that this problem has been studied in \cite{butsan, nay1, nay2, til} when $A$ is the identity matrix).

The additional constraints imposed on $\Sigma$ are typical in the so called \emph{average distance problems} (see for instance \cite{buoust,butste,leme,mipast,mostil}) and are important in existence results. Indeed, the compactness of the space made up of compact, connected sets with prescribed $\cH$ measure, in the Hausdorff metric, is  entailed by classical results of Blaschke and Go{\l}\k{a}b (see, for instance, \cite[Theorems 4.4.15 and 4.4.17]{ambtil}). Moreover, the lower semicontinuity of the map
$\Sigma\mapsto \sobn(\Omega,\Sigma)$ in the Hausdorff metric is standard routine (see, e.g., \cite[Lemma 5.2]{bra2} in the case $A$ is the identity and $f$ is constant).
These considerations allow to infer the existence of a minimizer in \eqref{problem} for every $L>0$.
Moreover, every minimizer $\Sigma$ satisfies $\cH(\Sigma)=L$ (otherwise if $\cH(\Sigma)<L$, then one could decrease
$C(\Si,\Om)$ by attaching to $\Sigma$ some short
segments, contradicting the optimality, cf.~\cite{buoust,butsan}). Therefore, the larger $L$ the longer the minimizers of \eqref{problem}. This makes interesting questions concerning the asymptotic behaviour of the minimizers:
\begin{quote}
\emph{with what limit density and local orientation will a minimizer of \eqref{problem} distribuite inside $\overline\Omega$, as $L\to+\infty$?}
\end{quote}

\smallskip

The plan of the paper is the following. The main result is stated in Section~\ref{sec1b}, answering the previous question. Section~\ref{sec2} is a collection of some new properties of Poincar\'e-Sobolev constants. Sections~\ref{sec3} and \ref{sec4} then contain, respectively, the $\Gamma$-liminf inequality and the $\Gamma$-limsup inequality.  The last Section~\ref{sec5} concerns the homogeneous and super-homogenous cases (i.e., the case $q\geq p$) and a challenging open problem is provided.

\section{Main result}\label{sec1b}
Via $\Gamma$-convergence theory we analyze the question of the introduction and prove that the minimizers of \eqref{problem} homogenize inside $\overline\Omega$, at different scales and with different orientations, as $L\to+\infty$.   
To keep track on the limit density and on the local orientation of a set we rely on an idea developed in  
\cite{tilzuc2}: to every $\Sigma$ in the class
\begin{equation*}
\mathcal A_L(\Omega):=\left\{\Sigma\subset\overline{\Omega}\,\,:\,\,\text{$\Sigma$ is a closed, connected set with $\cH(\Sigma)\leq L$}\right\},
\end{equation*}
for some $L>0$, we associate the probability measure $\theta_\Sigma$ of the
{product space} $\overline{\Omega}\times
 \mathbb{S}^1$, defined by 
\begin{equation}
\label{concrete}
\int_{\Ots} \varphi(x,y)d\theta_\Sigma(x,y):=\frac{1}{\cH(\Si)}
\int_{\Si}\varphi\big(x,\xi_{\Si}(x)\big)\,d\cH(x),\quad
\varphi\in \mathcal C_{\text{sym}}(\overline{\Omega}\times
 \mathbb{S}^1),
\end{equation}
where $\xi_\Sigma(x)$ is the unit normal vector to $\Sigma$ at $x\in\Sigma$, while
\[
\mathcal C_{\text{sym}}(\overline{\Omega}\times {\mathbb S}^1):=
\left\{
\varphi\in
\mathcal C(\overline{\Omega}\times {\mathbb S}^1)\,\,
|\,\, \varphi(x,y)=\varphi(x,-y),\,\forall x\in\overline{\Omega},\,
\forall y\in {\mathbb S}^1\right\}
\]
is the space of continuous functions with \emph{antipodal} symmetry (the unit normal being well-defined $\mathcal H^1$-almost everywhere, up to the orientation, since $\Si$ in $\mathcal A_L(\Omega)$ is 1-rectifiable). Actually, \eqref{concrete} defines $\theta_\Sigma$ as a $1$-dimensional \emph{varifold}, i.e., 
a probability measure over $\overline{\Omega}\times \mathbb P^1$ with $\mathbb P^1$ the \emph{projective space}
(see \cite{alm,sim} for general treatises on the subject).
We denote by $\vari$ the space of $1$-dimensional varifolds with unit mass, that is,
probability measures over $\overline{\Omega}\times \mathbb P^1$, 
endowed with the usual weak-* topology.
Throughout, however, we shall always consider a  varifold $\theta\in\vari$
as
an {equivalence class} of probability measures on
$\overline{\Omega}\times  \mathbb{S}^1$, two measures being equivalent if and only if they
induce the same linear functional on  $\mathcal C_{\text{sym}}(\overline{\Omega}\times {\mathbb S}^1)$: thus,
by choosing a representative in its equivalence class,
we can still treat $\theta\in\vari$ as a
probability measure over $\overline{\Omega}\times {\mathbb S}^1$. With this agreement
the weak-* convergence
of a sequence $\{\theta_L\}$ in $\vari$ to a varifold $\theta\in\vari$, denoted by $\theta_L\weak \theta$,
takes the form
\begin{equation}\label{weak}
\lim_{L\to\infty}
\int_{\Ots} \varphi(x,y)
 \,d\theta_L
=
\int_{\Ots} \varphi(x,y)
\,d\theta\quad
\forall
\varphi\in\mathcal C_{\text{sym}}(\overline{\Omega}\times {\mathbb S}^1).
\end{equation}

Then, for every $L>0$ we define the functional
$F_L\colon\vari\to [0,\infty]$ 
\begin{equation}\label{functional}
F_L(\theta)=
\begin{cases}
{L^p}\,\sobn(\Omega,\Sigma) \quad &\text{if $\theta=\theta_\Si$ as in \eqref{concrete} for some $\Si\in\mathcal A_L(\Omega)$,}\\[2mm]
+\infty  &\text{otherwise}
\end{cases}
\end{equation} 
The scaling factor $L^p$ is natural: as we will see in the proof of the $\Gamma$-convergence result, by letting $L$ go to infinity, the minimum value in \eqref{problem} decreases as the power $L^p$. Of course, rescaling with this factor does not alter the original problem \eqref{problem} anyhow.
The asymmetric role played by $p$ and $q$ throughout the paper (for instance the fact that this scaling factor does not depend on $q$) is due to the different powers in the definition \eqref{PS}.

The definition of the $\Gamma$-limit functional $F_\infty$ is more involved.

\begin{definition}\label{d.varifold}
Given $\theta\in\vari$, by choosing a representative in its equivalence class we can
regard $\theta$ as an element of $\Prob(\cil)$,
and we can consider its \emph{first marginal} $\mu$, i.e., the probability measure
over $\overline\Omega$ defined by
\[
\mu(E):=\theta(E\times {\mathbb S}^1),\quad
\text{for every Borel set $E\subseteq\overline\Omega.$}
\]
Then, by disintegration theorems (see, e.g., \cite[Theorem~2.28]{amfupa} or \cite[Theorem~5.31]{amgisa}), one can \emph{disintegrate}
$\theta$ as $\mu\otimes\nu_x$, where $\{\nu_x\}$ is a
$\mu$-measurable
family of probability
measures over ${\mathbb S}^1$ defined for $\mu$-a.e. $x\in\overline\Omega$. This means
that
\begin{equation*}
\int_{\Ots} \varphi(x,y)d\theta=
\int_{\overline\Omega}
\left(
\int_\sfera \varphi(x,y)\,d\nu_x(y)\right)
\,d\mu(x)
,\quad
\forall\varphi\in C(\overline{\Omega}\times
 \mathbb{S}^1).
\end{equation*}
The measure $\mu$ depends only on $\theta$ as an element of
$\vari$ (not on the choice of its representative in $\Prob(\cil)$), while
the measures $\nu_x$ may well depend on the particular representative.
However, the integrals
\begin{equation*}
x\mapsto \int_\sfera |A(x) y\cdot y|^{\frac{1}{2}} \,d\nu_x(y)\quad
\text{(defined for $\mu$-a.e. $x\in\cOmega$)}
\end{equation*}
are independent of the particular representative of $\theta$, since the function
$(x,y)\mapsto |A(x) y\cdot y|^{1/2}$ belongs to $\mathcal C_{\text{sym}}(\overline{\Omega} \times {\mathbb S}^1)$. As a consequence,
if $\rho(x)$ denotes the density of $\mu$ (w.r.to the Lebesgue measure on $\overline\Omega$),
the functional
\begin{equation}\label{gammalimit}
\Fp(\theta):=
\displaystyle
\spq\bigg(\int_\Om \frac{f(x)^{\frac{p}{p-q}}}{\big(\rho(x)\int_{\mathbb{S}^1} |A(x)y\cdot y|^{\frac{1}{2}}\, d\nu_x(y)\big)^\frac{pq}{p-q}}\,dx\bigg)^\frac{p-q}{q}  
\end{equation}
is well defined for every $\theta\in\vari$ since it only depends on
$\theta$ as an element of $\vari$. Notice that $F_\infty$ has values in $[0,+\infty]$ and $F_\infty(\theta)=+\infty$ whenever $\rho\equiv 0$  almost everywhere on a set of positive measure of $\Omega$. Here the constant
\begin{equation}\label{spq}
{\spq}:= (pq+p-q)^{1-\frac{p}{q}}\frac{p^\frac{p}{q}}{(p-1)q}\left(2\int_0^1 \frac{ds}{(1-s^q)^\frac{1}{p}}\right)^{-p}
\end{equation}
is just the Poincar\'e-Sobolev constant for the unit interval (see Remark~\ref{rem.spq}).
\end{definition}

The main result of the paper is the following.
\begin{theorem}[$\Gamma$-convergence in the sub-homogeneous case]\label{t.main}
As $L\to+\infty$, the functionals $\Flp$ defined in \eqref{functional}
$\Gamma$-converge, with respect to the weak-*
topology on $\vari$, to the functional $\Fp\colon\vari \to [0,+\infty]$ defined in \eqref{gammalimit}.
\end{theorem}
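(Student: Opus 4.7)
The plan is to follow the standard two-part structure of a $\Gamma$-convergence argument, proving the $\Gamma$-liminf and $\Gamma$-limsup inequalities separately, as later carried out in Sections~\ref{sec3} and~\ref{sec4}. The scaling factor $L^p$ in \eqref{functional} is already predicted by the 1D case: a strip across $\Sigma$ of width $\sim 1/(L\rho(x))$ carries a one-dimensional Poincaré-Sobolev constant scaling like $L^{-(p/q+p-1)}$, and after integrating across $\Omega$ and raising to the power $p/q$ one gets exactly the factor $L^{-p}$ in front of $\sobn(\Omega,\Sigma)$. This also explains why the 1D constant $\spq$ from \eqref{spq} should appear.

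For the $\Gamma$-liminf, given $\Sigma_L\in\Alo(\Omega)$ with $\theta_{\Sigma_L}\weak\theta$ and $\liminf L^p \sobn(\Omega,\Sigma_L)<\infty$, the plan is to build explicit test functions $u_L\in W_{\Sigma_L}(\Omega)$ whose Rayleigh quotients in \eqref{PS} approach $L^{-p}\Fp(\theta)$. I would cover $\overline\Omega$ by a fine grid of small rectangles $\rec$ of side $h$ and, on each such rectangle, define $u_L$ by concatenating one-dimensional Poincaré-Sobolev extremals on the slices orthogonal to the local direction of $\Sigma_L$. The weak-* convergence $\theta_{\Sigma_L}\weak\theta$ controls the asymptotic length density and orientation, so that summing over the grid and letting first $L\to\infty$ and then $h\to 0$ matches the lower bound $\Fp(\theta)$. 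A Hölder inequality with conjugate exponents $p/(p-q)$ and $p/q$, applied to optimize $u_L$ against $f$ and against $\rho(x)\int_{\sfera}|A(x)y\cdot y|^{1/2}\,d\nu_x(y)$, is what produces the integrand $f^{p/(p-q)}/(\rho\int|Ay\cdot y|^{1/2}d\nu_x)^{pq/(p-q)}$ and the outer exponent $(p-q)/q$ of \eqref{gammalimit}.

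For the $\Gamma$-limsup, given $\theta\in\vari$, I would construct an explicit recovery sequence $\Sigma_L\in\Alo(\Omega)$ with $\theta_{\Sigma_L}\weak\theta$ and $\limsup L^p \sobn(\Omega,\Sigma_L)\leq \Fp(\theta)$. By a density and diagonal argument it should suffice to deal with varifolds $\theta=\mu\otimes\nu_x$ having smooth density $\rho$ and with $\nu_x=\sum_j\alpha_j(x)\delta_{\xi_j(x)}$ a smoothly-varying finite combination of Dirac masses. On a small cell where $\nu_x\approx\delta_\xi$ and $\rho\approx c$, the natural candidate is a comb $\comb$ with teeth orthogonal to $\xi$ spaced $\sim 1/(Lc)$; for several directions one superposes combs $\combi$ at well-separated nested scales, one for each $\xi_j$, and joins everything into a single connected set by short transversal arcs whose total $\cH$-mass is $o(L)$. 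On such a $\Sigma_L$ the upper bound on $\sobn(\Omega,\Sigma_L)$ follows by applying the Poincaré-Sobolev inequality \eqref{sobolev} strip-by-strip, using the sharp 1D constant $\spq$ in each strip, and passing to the limit with the aid of the smoothness of $\rho$, $\nu_x$, $f$ and $A$.

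The hard part will be the limsup construction. Realizing a varifold with several competing orientations at each point forces an honestly multi-scale comb design, and matching $\Fp(\theta)$ up to $o(1)$ demands that the Poincaré-Sobolev bound in each strip asymptotically saturates the 1D sharp constant $\spq$. This entails careful book-keeping to keep $\cH(\Sigma_L)\leq L$, to preserve connectedness through the short joining arcs, and to control the errors arising from the variation of $A$, $f$, $\rho$ and $\nu_x$ across each cell and between scales; moreover, since $\Fp$ is not continuous on all of $\vari$, identifying a dense class on which the construction can be carried out, and then running a diagonal argument, is itself a delicate step.
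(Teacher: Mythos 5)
Your overall architecture (separate liminf/limsup steps, the one-dimensional constant $\spq$ as the source of the prefactor, the $L^p$ scaling, comb-type recovery sequences plus a density/diagonal argument) is the same as the paper's, but both halves, as you describe them, have genuine gaps at exactly the hard points. For the liminf, you propose to test \eqref{PS} with functions built by ``concatenating one-dimensional extremals on slices orthogonal to the local direction of $\Sigma_L$''. But $\Sigma_L$ is an arbitrary connected competitor of which you only know the weak-* limit of its varifold: when $\nu_x$ is not a Dirac mass there is no local direction to slice against; the spacing of $\Sigma_L$ along any family of slices is not controlled by varifold convergence (which only fixes averaged length density and orientation on squares); a slice-wise defined function need not lie in $W^{1,p}(\Omega)$ without transversal regularity that $\Sigma_L$ need not have, and it must vanish on \emph{all} of $\Sigma_L$, including pieces crossing the slices with other orientations. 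Moreover, slicing in a fixed direction naturally produces projection-type quantities, and it is not clear how it yields the Riemannian weight $\int_{\sfera}|A(x)y\cdot y|^{1/2}\,d\nu_x(y)$ in the denominator of \eqref{gammalimit}. The paper's route is different and robust precisely here: localize via Remark~\ref{r.mon} by inserting $\partial E_j$ on a grid of small squares, split via the exact locality formula of Proposition~\ref{p.disconnected} (this is where the additivity of the $q/(p-q)$ powers -- your ``H\"older step'' -- really lives), and inside each square apply Theorem~\ref{t.lower}, whose proof tests with $u_1\bigl(d_{B\Sigma}/(2\delta_B)\bigr)$, the distance function to $A^{-1/2}\Sigma$; it is this distance-function construction that converts length weighted by $|A\xi\cdot\xi|^{1/2}$ (the Riemannian length \eqref{hausm}) into the sharp denominator, using only the varifold limit and the number of connected components.

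For the limsup, the single comb on cells where $\nu_x\approx\delta_\xi$ is right, but ``superposing combs at well-separated nested scales, one for each $\xi_j$'' does not work. If the scales are well separated, the coarser combs carry a negligible fraction of the total length, so the associated varifolds converge to the Dirac mass of the finest comb's orientation and you cannot realize a general $\nu_x=\sum_j\beta_j\delta_{\xi_j}$ with prescribed weights. If instead you tune comparable spacings to match the weights, you overlay families of different orientations in the same region, the cells become small polygons governed by the finest spacing, and there is no mechanism producing the averaged denominator $\bigl(\int_{\sfera}|Ay\cdot y|^{1/2}d\nu\bigr)^p$ of \eqref{stimabase}; already for $A=I$ and two equal weights the overlay yields a checkerboard of small squares whose homogenized value does not match the single-full-length-comb value that the target requires, so \eqref{gammasup2} is missed. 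The construction that attains the bound (Definition~\ref{d.tile} and Proposition~\ref{p.tile}) separates orientations \emph{spatially}, not by scale: each cell is sliced into stacked rectangles $Y_j$ of heights proportional to $\beta_j\eps_j$, each filled with segments orthogonal to $\xi_j$ at spacing $\eps_j=\eps|A\xi_j\cdot\xi_j|^{1/2}$, so that the per-strip constants are equalized; the upper bound then follows from Theorem~\ref{t.thin} on the trapezoidal cells together with Proposition~\ref{p.disconnected}, and the approximation step is run on varifolds fitted to a grid (Definition~\ref{d.step}) rather than on your smooth-density class. Connectedness and the length budget are handled there as you anticipate, but the core of the limsup is this equalized stacked tile, which your proposal does not supply.
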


When $q=1$ and $A$ is the identity matrix, an analogous $\Gamma$-convergence result, but in the restricted space of probability measures, has been obtained in the papers \cite{butsan, nay1, nay2, til} (where only the limit density of the minimizing sequences has been supplied). The proof of Theorem~\ref{t.main} inherits some of the ideas developed in \cite{tilzuc2}, where problem \eqref{problem} has been studied in the homogeneous linear case, corresponding to the exponents $p=q=2$. Notice that the extension to the sub-homogeneous case $p<q$ is not immediate anyhow: one has to face with the \emph{locality} of the functional $C(\Omega, \Sigma)$ (see Proposition~\ref{p.disconnected}), which requires several {ad hoc} arguments.

Now, as the space $\vari$ is compact in the weak-* topology, from $\Gamma$-convergence theory
(see~\cite{dalmaso} and also \cite[Section 5]{tilzuc3}) we can recover some information on the asymptotic behaviour of the minimizers of \eqref{problem}.

\begin{corollary}
For $L>0$, let $\Sigma_L$ be a minimizer of \eqref{problem} and let $\theta_{\Sigma_L}$ be the associated varifolds, according to \eqref{concrete}. Then, as $L\to+\infty$, $\theta_{\Sigma_L}\weak\theta_\infty$ (up to subsequences) in the weak-* topology of $\vari$, where $\theta_\infty$ is a minimizer of $F_\infty$. In particular the following facts hold.
\begin{itemize}
\item[-] \emph{(Limiting density of the Dirichlet region).}
For every square $Q\subset\Omega$,
\begin{equation*}
\lim_{L\to+\infty}\frac{\cH(\Sigma_L\cap Q)}{\cH(\Si_L)}=\int_Q \rho_\infty(x)\,dx\qquad
\end{equation*}
where $\rho_\infty$ is 
\begin{equation}\label{optden}
\rho_\infty(x):=\frac{f(x)^r/a_{\max}(x)^{rq/2}}{\int_{\Om}f(y)^r/a_{\max}(y)^{rq/2}\,dy}
\end{equation}
with $r:=p/(pq+p-q)$.

\item[-] \emph{(Local orientation of the Dirichlet region).} If $Q$ is contained in the anisotropy region $\{x\in \Omega: a_{\min}(x)<a_{\max}(x)\}$, then
for every $\psi\in C(\sfera)$ with $\psi(-y)=\psi(y)$,
\begin{equation}\label{optori}
\lim_{L\to+\infty}\frac 1  {\cH(Q\cap\Sigma_L)}
\int_{Q\cap\Sigma_L} \psi\bigl(\xi_L(x)\bigr) \,d\cH(x)
=
\dfrac{\int_Q \rho_\infty(x)\psi\bigl(\xi(x)\bigr)\,dx}
{\int_Q \rho_\infty(x)\,dx},
\end{equation}
where $\xi_L(x)$ is the unit normal to $\Sigma_L$ at $x\in\Sigma_L$, while $\xi(x)$ is
the (unique up
to the orientation) eigenvector of $A(x)$  relative to $a_{\max}(x)$.

\item[-] \emph{(Asymptotics of the Poincar\'e-Sobolev constant).}
\begin{equation}\label{optcost}
 \lim_{L\to +\infty} L^p \, \sobn(\Omega, \Sigma_L)=\spq  {\left(\int_{\Om}\frac{f(x)^r}{a_{\max}(x)^\frac{rq}{2}}\, dx\right)^{p+\frac{p}{q}-1}},
\end{equation}
with $r:=p/(pq+p-q)$.
\end{itemize}
\end{corollary}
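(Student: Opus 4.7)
The plan is to combine the fundamental theorem of $\Gamma$-convergence with Theorem~\ref{t.main} and then compute the minimizers of $\Fp$ explicitly. Compactness of $\vari$ in the weak-* topology gives equicoerciveness of $\{\Flp\}$ for free, so the sequence $\{\theta_{\Si_L}\}$ associated with the minimizers of \eqref{problem} has cluster points, each of which is a minimizer of $\Fp$; moreover $\min\Flp\to\min\Fp$. Since $\cH(\Si_L)=L$ (as recalled after \eqref{problem}), the rescaling factor in \eqref{functional} is exactly $L^p$, so the convergence of minima rewrites as $L^p\,\sobn(\Om,\Si_L)\to\min\Fp$, which will yield \eqref{optcost} once $\min\Fp$ is evaluated.

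To identify the minimizers of $\Fp$ I would optimize in two successive steps over a disintegration $\theta=\mu\otimes\nu_x$ with $\mu=\rho\,dx$. For $\mu$ fixed, the integrand in \eqref{gammalimit} decreases as $\int_\sfera|A(x)y\cdot y|^{1/2}\,d\nu_x(y)$ increases; since the maximum of $|A(x)y\cdot y|^{1/2}$ over $\sfera$ equals $a_{\max}(x)^{1/2}$ and is attained precisely on the eigenspace of $A(x)$ relative to $a_{\max}(x)$, the best slice is the antipodal Dirac combination $\nu_x=\tfrac12(\delta_{\xi(x)}+\delta_{-\xi(x)})$, and uniquely so on the anisotropy region $\{a_{\min}<a_{\max}\}$. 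Substitution reduces the problem to minimizing
\[
\int_\Om \frac{f(x)^{p/(p-q)}}{\rho(x)^{pq/(p-q)}\,a_{\max}(x)^{pq/(2(p-q))}}\,dx
\]
over densities with $\int_\Om\rho\,dx=1$. A Lagrange multiplier argument (equivalently, a H\"older inequality with conjugate exponents tailored to this integrand) supplies a unique minimizer $\rho_\infty\propto f^r/a_{\max}^{rq/2}$ with $r=p/(pq+p-q)$, matching \eqref{optden} after normalization. Plugging $\rho_\infty$ back into \eqref{gammalimit} and simplifying the algebra of exponents, one finds $\min\Fp=\spq\bigl(\int_\Om f^r/a_{\max}^{rq/2}\,dx\bigr)^{p+p/q-1}$, producing \eqref{optcost}.

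The density and orientation statements then follow from the weak-* convergence $\theta_{\Si_L}\weak\theta_\infty$ in $\vari$ together with the explicit form of $\theta_\infty$. For a square $Q\subset\Om$ one approximates the functions $\mathbf{1}_Q(x)$ and $\mathbf{1}_Q(x)\psi(y)$ by elements of $\Csym$ and passes to the limit using \eqref{weak} and \eqref{concrete}; the approximation error vanishes because the first marginal $\rho_\infty\,dx$ is absolutely continuous and in particular does not charge $\partial Q$. Using $\cH(\Si_L)=L$ gives \eqref{optden} immediately, while the restriction in \eqref{optori} to the anisotropy region reflects exactly the region on which $\nu_x$ is uniquely determined by the variational problem.

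The main obstacle I expect is the bookkeeping in the second step: one must verify that \emph{every} minimizer of $\Fp$ shares the same first marginal $\rho_\infty\,dx$ (so the density formula is valid for any weak-* limit, not merely for a special one) and that the slice measures coincide with the antipodal pair $\tfrac12(\delta_{\xi(x)}+\delta_{-\xi(x)})$ $\mu$-almost everywhere on the anisotropy region. Once uniqueness of $\rho_\infty$ and the characterization of the optimal $\nu_x$ are in hand, the three conclusions of the corollary follow in one stroke from the single statement $\theta_{\Si_L}\weak\theta_\infty$.
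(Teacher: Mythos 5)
Your proposal is correct and follows essentially the same route as the paper, which deduces the corollary from Theorem~\ref{t.main} via compactness of $\vari$ (equicoercivity) and the fundamental theorem of $\Gamma$-convergence, with the explicit formulas coming from minimizing $F_\infty$ exactly as you do (optimal slices concentrated on the $a_{\max}$-eigendirections, then H\"older/Lagrange giving $\rho_\infty\propto f^r/a_{\max}^{rq/2}$ and $\min F_\infty=\spq\bigl(\int_\Om f^r/a_{\max}^{rq/2}\,dx\bigr)^{p+\frac pq-1}$). Your flagged bookkeeping points (uniqueness of the marginal by strict convexity, uniqueness of $\nu_x$ only on the anisotropy region, and testing with approximations of indicators since $\rho_\infty\,dx$ does not charge $\partial Q$) are exactly the standard details the paper leaves implicit.
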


The proof of Theorem~\ref{t.main} is constructive: oriented \emph{comb-shaped patterns} (see Definition~\ref{d.tile}), periodically reproduced inside $\Omega$ at the different scales \eqref{optden} and with the different orientations \eqref{optori} (so that they are locally orthogonal to eigenvectors relative to maximal eigenvalues), can be used to build examples of asymptotically optimal sets, i.e., sequences of sets satisfying~\eqref{optcost}. Of course, in the isotropy region $\{x\in \Omega: a_{\min}(x)=a_{\max}(x)\}$ the orientation becomes not relevant. Asymptotically optimal sets allows to explicitly compute the constant $\spq$ in the $\Gamma$-limit \eqref{gammalimit}.

\section{Some results on Poincar\'e-Sobolev constants}\label{sec2}

\subsection{General results}

The first result concerns the behaviour of Poincar\'e-Sobolev constants w.r.to set inclusions.

\begin{remark}[Monotonicity of Poincar\'e-Sobolev constants]\label{r.mon}

Let $\Omega_1, \Omega_2\subset \Omega$ be open sets with Lipschitz boundaries and $\Sigma_1, \Sigma_2\in \mathcal A_L(\Omega)$ for some $L>0$. If 
\[
\Omega_1\subset \Omega_2 \quad \text{and}\quad  \Sigma_1\supset (\Sigma_2\cap  {\Omega}_1)\cup  \partial \Omega_1,\footnote{For the monotonicity of Poincar\'e-Sobolev constants are sufficients the inclusions of the domain $\Omega_1$ in $\Omega_2$ and of the entire boundary $\partial \Omega_1$ plus the part of $\Sigma_2$ which intersect $\Omega_1$ in $\Sigma_1$, see Figure~\ref{fig.mon} for an example).}
\] 
then every function $u$ in $W^{1,p}(\Omega_1)$ such that $u= 0$ along $\Sigma_1$ also belongs to $W^{1,p}(\Omega_2)$ (by setting $u=0$ over $\Omega_2\setminus \overline \Omega_1$) and $u= 0$ along $\Sigma_2$. By the variational characterization \eqref{PS}, the class of admissible function for $\sobn(\Omega_2,\Sigma_2)$ is not smaller than the one for  $\sobn(\Omega_1,\Sigma_1)$, then
$$\sobn(\Omega_1,\Sigma_1)\leq \sobn(\Omega_2,\Sigma_2).$$
In particular, when $\Sigma_1=\partial \Omega_1$ and $\Sigma_2=\partial \Omega_2$, then the inclusion $\Omega_1\subset \Omega_2$ implies the inequality $\sobn(\Omega_1)\leq \sobn(\Omega_2)$.
\end{remark}

\begin{figure}
\centering
\subfigure{
\begin{tikzpicture}

  \draw[thick] (0,0) circle (1cm);
  \node at (1.2,0.5) {$\Omega_1$};
  \phantom{\draw (0,0) circle (2cm);
  \node at (2.3,0.3) {$\Omega_2$};}
  \draw[-,thick,black] (0,0)--(1,0);
  \draw[-,thick,black] (0,0)--(-0.3,-0.1);
  \draw[-,thick,black] (0,0)--(-0.3,0.1);
  \node at (0.1,-0.3)  {$\Sigma_1$};
  \end{tikzpicture}}
  \subfigure{
\begin{tikzpicture}

  \draw[dashed] (0,0) circle (1cm);
 % \node at (1.3,0.2) {$\Omega_1$};
  \draw (0,0) circle (2cm);
  \node at (2.3,0.3) {$\Omega_2$};
  \draw[-,dashed,black] (0,0)--(1,0);
  \draw[-,dashed,black] (0,0)--(-0.3,-0.1);
  \draw[-,dashed,black] (0,0)--(-0.3,0.1);
  \node at (0.65,-0.25)  {$\Sigma_2$};
  \draw[-,thick,black] (0,0)--(1.5,0);
  \draw[-,thick,black] (0,0)--(1,0);
  \end{tikzpicture}}

 \caption{An example of domains $\Omega_1, \Omega_2$ (two circles of different radii) and sets $\Sigma_1,\Sigma_2$ that fulfill the assumptions for the validity of the monotonicity property: on the left $\Omega_1$ and $\Sigma_1$ on the right $\Omega_2$ and $\Sigma_2$ (in dashed line the set $\Sigma_1$).}\label{fig.mon}
\end{figure}
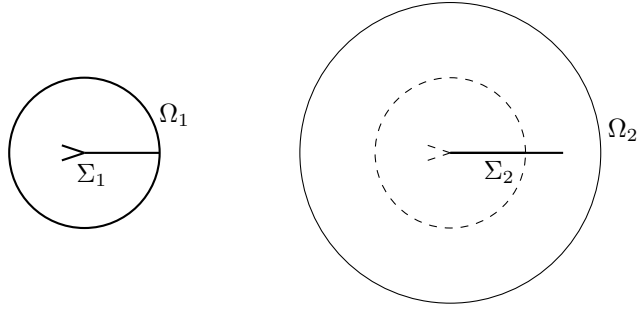

The next result, concerning the behavior of Poincar\'e-Sobolev constants on disconnected domains, will be responsible for the integral representation of the $\Gamma$-limit \eqref{gammalimit}. A very similar result is contained in \cite{brafra} (the case $q=1$ has been already proved in \cite{banwar}). 

\begin{proposition}[Poincar\'e-Sobolev constants of disconnected domains]\label{p.disconnected}
Let $\Omega_1$, $\Omega_2$$\subset \Omega$ be open sets such that $\Omega_1\cap \Omega_2\not= \emptyset$ and let $\Sigma\in \mathcal A_L(\Omega)$ for some $L>0$ such that $\Sigma\supset \partial \Omega_1\cup \partial\Omega_2$. Then
\begin{equation}\label{concomp1} 
\sobn(\Omega_1\cup \Omega_2,\Sigma)^\frac{q}{p-q}=\sobn(\Omega_1,\Sigma)^{\frac{q}{p-q}}+\sobn(\Omega_2,\Sigma)^{\frac{q}{p-q}}.
\end{equation}
In particular, when $\Sigma=\partial \Omega_1\cup \partial\Omega_2$ then $
\sobn(\Omega_1\cup \Omega_2)^\frac{q}{p-q}=\sobn(\Omega_1)^{\frac{q}{p-q}}+\sobn(\Omega_2)^{\frac{q}{p-q}}$.
\end{proposition}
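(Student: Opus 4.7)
The plan is to prove the identity by two matching inequalities, both coming directly from the variational characterization \eqref{PS}. The structural point is that the hypothesis (read as $\Omega_1\cap\Omega_2=\emptyset$, since $\Omega_1\cup\Omega_2$ must be disconnected for the formula to have content) together with $\partial\Omega_1\cup\partial\Omega_2\subset\Sigma$ means that every $u\in W_\Sigma(\Omega_1\cup\Omega_2)$ decouples into $u_i:=u|_{\Omega_i}\in W_\Sigma(\Omega_i)$, and conversely any pair $u_i\in W_\Sigma(\Omega_i)$ recombines, with independent scalings, into an admissible $u$ on $\Omega_1\cup\Omega_2$. Writing $N_i:=\int_{\Omega_i}f|u_i|^q\,dx$ and $D_i:=\int_{\Omega_i}|A\nabla u_i\cdot\nabla u_i|^{p/2}\,dx$, the Rayleigh quotient on $\Omega_1\cup\Omega_2$ becomes $(N_1+N_2)^{p/q}/(D_1+D_2)$.

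For the upper bound, I would use the Poincar\'e-Sobolev inequalities on each piece in the form $D_i\geq N_i^{p/q}/C(\Omega_i,\Sigma)$, writing $C_i:=C(\Omega_i,\Sigma)$, so that
\[
\frac{(N_1+N_2)^{p/q}}{D_1+D_2}\;\leq\;\frac{(N_1+N_2)^{p/q}}{N_1^{p/q}/C_1+N_2^{p/q}/C_2}.
\]
The right-hand side depends only on $t:=N_2/N_1\in[0,\infty)$; an elementary calculus computation (using $p/q>1$) gives the critical point $t^\star=(C_2/C_1)^{q/(p-q)}$ and the maximum value $\bigl[C_1^{q/(p-q)}+C_2^{q/(p-q)}\bigr]^{(p-q)/q}$. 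Taking the sup over $u$ on the left yields $C(\Omega_1\cup\Omega_2,\Sigma)^{q/(p-q)}\leq C_1^{q/(p-q)}+C_2^{q/(p-q)}$.

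For the lower bound, I would show that this maximum is in fact attained by a genuine test function. Let $u_i^\star\in W_\Sigma(\Omega_i)$ be a maximizer of $C(\Omega_i,\Sigma)$ (its existence is the standard part of the Poincar\'e-Sobolev theory cited in the introduction), normalized so that $D(u_i^\star)=1$, hence $N(u_i^\star)=C_i^{q/p}$. Extending by zero and setting $u:=\alpha_1 u_1^\star+\alpha_2 u_2^\star$, the Rayleigh quotient of $u$ is
\[
\frac{\bigl(\alpha_1^q C_1^{q/p}+\alpha_2^q C_2^{q/p}\bigr)^{p/q}}{\alpha_1^p+\alpha_2^p}.
\]
Choosing $\alpha_i$ so that $N_i:=\alpha_i^q C_i^{q/p}$ realizes the critical ratio $t^\star$ of the previous step (concretely, $\alpha_i^{p-q}$ proportional to $C_i^{q/(p-q)}$) plugs the test function into the equality case of the one-variable optimization and produces the value $\bigl[C_1^{q/(p-q)}+C_2^{q/(p-q)}\bigr]^{(p-q)/q}$ exactly; hence $C(\Omega_1\cup\Omega_2,\Sigma)^{q/(p-q)}\geq C_1^{q/(p-q)}+C_2^{q/(p-q)}$. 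Combining both inequalities gives \eqref{concomp1}.

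The main obstacle, and the only step that is not purely formal, is the one-variable maximization: verifying the algebraic identity
\[
C_1\bigl[1+(C_2/C_1)^{q/(p-q)}\bigr]^{(p-q)/q}=\bigl[C_1^{q/(p-q)}+C_2^{q/(p-q)}\bigr]^{(p-q)/q}
\]
at the critical point $t^\star$. This is precisely where the sub-homogeneity assumption $q<p$ is indispensable: it makes the exponent $q/(p-q)$ finite and positive, ensures that the critical point of the one-variable problem is an interior maximum rather than a boundary one, and explains the exact exponent appearing in \eqref{concomp1}. (In the homogeneous case $p=q$ the derivation collapses, consistent with the fact that a different decomposition rule governs principal frequencies there, as alluded to in Section~\ref{sec5}.)
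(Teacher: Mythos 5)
Your argument is correct and is essentially the paper's own proof (with the hypothesis rightly read as $\Omega_1\cap\Omega_2=\emptyset$): the upper bound comes from applying \eqref{sobolev} on each piece and optimizing the resulting one-variable ratio, and the lower bound from testing with a suitably scaled sum of the two maximizers extended by zero — the paper merely pins down the right relative scaling by a continuity (intermediate value) argument in the parameter $t$ instead of solving for it explicitly as you do. One small slip in your parenthetical: realizing the critical ratio $t^\star$ requires $\alpha_i\propto C_i^{q/(p(p-q))}$, i.e.\ $\alpha_i^{p-q}\propto C_i^{q/p}$, not $\alpha_i^{p-q}\propto C_i^{q/(p-q)}$; since your defining condition on $N_i$ is the correct one, this does not affect the argument.
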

\begin{proof}
Let
$u,u_1, u_2$ be maximizers corresponding to the constants $\sobn(\Omega_1\cup \Omega_2,\Sigma)$, $\sobn(\Omega_1,\Sigma)$ and $\sobn(\Omega_2,\Sigma)$. 

To prove the inequality $\leq$ in \eqref{concomp1},  let $\alpha$ be the constant $0\leq \alpha\leq 1$ defined by $\alpha:=(\int_{\Omega_1}f(x)u(x)^qdx)/(\int_{\Omega_1\cup \Omega_2}f(x)u(x)^qdx)$. 
Since, by \eqref{sobolev},
\[
\Big(\int_{\Omega_j} f(x)u(x)^qdx\Big)^{\frac pq} \leq  \sobn(\Omega_j,\Sigma)\int_{\Omega_j} |A(x)\nabla u(x)\cdot \nabla u(x)|^{\frac p2}dx
\]
for $j=1$ and $j=2$, then using the super-addivity of the power $\tau\mapsto\tau^{\frac pq}$ yields
\begin{equation}\label{c}
\frac{1}{\sobn(\Omega_1\cup\Omega_2,\Sigma)}\geq \frac{\alpha^{\frac pq}}{\sobn(\Omega_1,\Sigma)}+\frac{(1-\alpha)^{\frac pq}}{\sobn(\Omega_2,\Sigma)}\geq \min_{0\leq s\leq 1} \frac{s^{\frac pq}}{\sobn(\Omega_1,\Sigma)}+\frac{(1-s)^{\frac pq}}{\sobn(\Omega_2,\Sigma)}.
\end{equation}
By convexity of the power $\tau\mapsto\tau^{\frac pq}$, the minimum in the right-hand side of the previous inequality is reached by
\begin{equation}\label{alphat}
s=\frac{\sobn(\Omega_1,\Sigma)^\frac{q}{p-q}}{\sobn(\Omega_1,\Sigma)^\frac{q}{p-q}+\sobn(\Omega_2,\Sigma)^\frac{q}{p-q}},
\end{equation}
which plugged into \eqref{c} gives
\[
\frac{1}{\sobn(\Omega_1\cup\Omega_2,\Sigma)}\geq \frac{1}{\left(\sobn(\Omega_1,\Sigma)^\frac{q}{p-q}+\sobn(\Omega_2,\Sigma)^\frac{q}{p-q}\right)^\frac{p-q}{q}}.
\]
Therefore, raising to the power $q/(p-q)$ and passing to reciprocals one obtains the desired upper bound. 

For the reverse inequality $\geq$ in \eqref{concomp1}  given $t>0$ we test $\sobn(\Omega_1\cup \Omega_2, \Sigma)$ with the function $t^{\frac1q} u_1+u_2\in W^{1,p}(\Omega_1\cup\Omega_2)$, which is zero along $\Sigma$, to obtain
\[
\frac{1}{\sobn(\Omega_1\cup\Omega_2,\Sigma)}\leq  \frac{\alpha_t^{\frac pq}}{\sobn(\Omega_1,\Sigma)}+\frac{(1-\alpha_t)^{\frac pq}}{\sobn(\Omega_2,\Sigma)},
\]
where $\alpha_t:=(t\int_{\Omega_1}u_1(x)^q dx)/(t\int_{\Omega_1}u_1(x)^qdx+\int_{\Omega_2}u_2(x)^qdx)$.
Now, since $\alpha_t$ is continuous as a function of $t$, $\alpha_t\to 0$ as $t\to 0^+$ and $\alpha_t\to1$ as $t\to+\infty$, there exists a value $t$ such that $\alpha_t$ can be chosen as in \eqref{alphat}. Then plugging this value into the previous inequality provides also the upper bound in \eqref{concomp1}.
\end{proof}

%In order to be consistent, the two sets have been assumed to be Lipschitz even if this was not necessary for the proof of the result (indeed in this paper the constant \eqref{PS} has been defined only for Lipschitz domains).

In Proposition~\ref{p.disconnected} we tacitly used definition \eqref{PS} also in the case of irregular and disconnected sets.

\subsection{Results in the case of constant coefficients}

When the coefficient matrix $A(x)$ is constant, 
equal to a $2\times 2$ positive definite matrix 
independent of $x$, and $f=1$ it is possible to get further results.

\begin{remark}[Scaling of Poincar\'e-Sobolev constants]\label{r.scaling}
Let $\Sigma\in \mathcal A_L(\Omega)$ for some $L>0$ and let $t>0$. 
If the coefficient matrix $A$ is constant and $f= 1$ then by the change of variable $tx=y$ in the integrals of \eqref{PS} it follows that 
\begin{equation*}
\sobn(t\Omega, t \Sigma)=t^{p+\frac{2p}{q}-2}\sobn(\Omega,\Sigma).
\end{equation*}
\end{remark}

In the case of constant coefficients one can derive explicit formulas. 

\begin{remark}[Explicit Poincar\'e-Sobolev contants]\label{rem.spq}
The value $\spq$ appearing in \eqref{spq} is the Poincar\'e-Sobolev constant for the unit interval in one
dimension, namely
\begin{equation}\label{PS1}
\spq = \max_{u\in W^{1,p}_0(0,1) \setminus\{0\}} \frac
{\Big(\int_0^1 |u(z)|^q\,dz\Big)^{\frac pq}}{\int_0^1 |u'(z)|^p\,dz}.
\end{equation}
Notice that the minimum is attained by the first eigenfunction $u_1$, which does not change sign in $(0,1)$ and it is symmetric w.r.to $z=1/2$ with
\begin{equation}\label{one}
\spq=2^{\frac{p}{q}-1}\frac{\left(\int_0^{\frac12} |u_1(z)|^q dz\right)^{\frac{p}{q}}}{\int_0^{\frac{1}{2}} |u_1'(z)|^pdz}.
\end{equation}
Moreover, if $u_1$ is meant positive, then it is increasing and concave in $(0,1/2)$ (see \cite{draman, fralam} for more details).
Notice that the previous considerations on the constant $c_{p,q}$ holds true for the whole range $1\leq q<\infty$.

Now, in the specific case $q<p$ we can also say something for two-dimensional domains. If $A$ is the identity matrix and $f=1$, the Poincar\'e-Sobolev constant \eqref{PS} of an open rectangle $R\subset \Omega$ with Dirichlet condition prescribed along two parallel sides $l_1$ and $l_2$ at distance $h$ can be explicitly computed:
\[
C(R,l_1\cup l_2)=\spq |R|^{\frac{p}{q}-1}h^{p}.
\]
Indeed the estimate $\geq$ follows by testing \eqref{PS} with the one dimensional eigenfunction $u_1$, while the estimate $\leq$ by  Fubini's theorem, \eqref{PS1} and Jensen's inequality (a similar argument has been adopted for \cite[Equation (3.3)]{bra2}).
\end{remark}

When $A$ and $f$ are constants we may also bound $\sobn(\Omega,\Sigma)$ in
terms of some geometric quantities (see \cite{nay1,til, tilzuc1, tilzuc2} for similar bounds corresponding to the cases $q=1$ and $p=q$). 
The key point to derive lower and upper bounds is the linear change of variable $x=A^{1/2}y$ in the integrals of \eqref{PS}, which allows to reduce $\sobn(\Omega,\Sigma)$ to the Poincar\'e-Sobolev constant of a new domain associated to the identity matrix.
The following lower bound for Poincar\'e-Sobolev constants will be crucial for the $\Gamma$-liminf inequality \eqref{gammainf}, and thus is asymptotically optimal as $\cH(\Sigma)\to+\infty$.

\begin{theorem}[Lower bound for Poincar\'e-Sobolev constants]\label{t.lower}
Let $\Si\subset\overline\Omega$ be a compact set with $N$ connected component for some $N \in \N$. If the coefficient matrix $A$ is constant and $f= 1$ then
\begin{equation}\label{estb}
\sobn(\Omega,\Sigma)\geq \spq (2\delta)^{p+\frac{p}{q}-1} \frac{\cHa(\Si)^{\frac pq}}{\cHa(\Si)+N\pi \det A^{\frac12}\delta},
\end{equation} 
where
\begin{equation}\label{hausm}
\cHa(\Si):=\int_{\Si} |A\xi(x)\cdot\xi(x)|^\frac{1}{2}\,  d\cH(x),
\end{equation}
represents the ``Riemannian'' length of $\Si$ w.r.to $A$ with $\xi(x)$ (a measurable selection of) the unit \emph{normal} to $\Si$ at the point $x\in \Si$ (by rectifiability, this normal is well defined, up to the orientation, at $\cH$-a.e. $x\in \Si$) and
\begin{equation}\label{delta}
\delta:=\dfrac{|\Omega|}
 {\cHa(\Si)+\left(\cHa(\Si)^2+N\pi|\Omega|\det A^{\frac{1}{2}}\right)^{\frac12}}
\end{equation}
is the positive root of the quadratic equation $2\cH_A(\Sigma)\delta+N\pi\det A^{\frac{1}{2}}\delta^2=|\Omega|$.
\end{theorem}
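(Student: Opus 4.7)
The plan is to construct an explicit admissible test function whose Rayleigh-type ratio in \eqref{PS} realises the claimed lower bound on $\sobn(\Om,\Si)$. First I would exploit the linear change of variable $x=A^{1/2}y$ already suggested in the excerpt: with $\tilde u(y):=u(A^{1/2}y)$, a direct calculation gives $A\nabla_x u\cdot\nabla_x u=|\nabla_y\tilde u|^2$, so \eqref{PS} on $(\Om,\Si)$ is transported to the Poincar\'e-Sobolev constant on $(\tilde\Om,\tilde\Si):=(A^{-1/2}\Om,A^{-1/2}\Si)$ with identity matrix and unit density. Together with the $2$D identity $|A^{-1/2}\tau|=(A\xi\cdot\xi)^{1/2}/\det A^{1/2}$ (valid for unit vectors $\tau\perp\xi$), this yields
\[
\sobn(\Om,\Si)=(\det A^{1/2})^{p/q-1}\sobn(\tilde\Om,\tilde\Si),\quad\cH(\tilde\Si)=\cHa(\Si)/\det A^{1/2},\quad|\tilde\Om|=|\Om|/\det A^{1/2};
\]
$N$ is preserved and the $\tilde\delta$ solving $2\cH(\tilde\Si)\tilde\delta+N\pi\tilde\delta^2=|\tilde\Om|$ coincides with $\delta$ from \eqref{delta}. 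Hence \eqref{estb} reduces to its isotropic counterpart
\[
\sobn(\tilde\Om,\tilde\Si)\geq\spq\,(2\tilde\delta)^{p+p/q-1}\,\frac{\tilde L^{p/q}}{\tilde L+N\pi\tilde\delta},\qquad\tilde L:=\cH(\tilde\Si).
\]

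For the isotropic case, let $\phi:[0,\tilde\delta]\to[0,\infty)$ be the restriction to $[0,\tilde\delta]$ of the rescaled first eigenfunction of \eqref{PS1} on $(0,2\tilde\delta)$. By Remark~\ref{rem.spq}, $\phi(0)=0$, $\phi'(\tilde\delta)=0$, and $\phi$ is positive, increasing and concave on $(0,\tilde\delta)$, with
\[
\frac{\bigl(\int_0^{\tilde\delta}\phi(t)^q\,dt\bigr)^{p/q}}{\int_0^{\tilde\delta}\phi'(t)^p\,dt}=2^{1-p/q}\,\spq\,(2\tilde\delta)^{p+p/q-1}.
\]
As test function in \eqref{PS} I would take $\tilde u(x):=\phi\bigl(d(x)\wedge\tilde\delta\bigr)$, where $d(x):=\mathrm{dist}(x,\tilde\Si)$; this is Lipschitz and vanishes on $\tilde\Si$. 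The core ingredient is the Steiner tube bound
\[
V(t):=\bigl|\{x\in\tilde\Om\colon d(x)\leq t\}\bigr|\leq 2t\tilde L+N\pi t^2=:W(t),\qquad t\geq 0,
\]
obtained by covering the $t$-neighbourhood of each of the $N$ connected components by a rectangular sausage with two half-disc caps.

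Using the coarea formula ($|\nabla d|=1$ a.e.) one rewrites
\[
\int_{\tilde\Om}|\nabla\tilde u|^p\,dx=\int_0^{\tilde\delta}\phi'(t)^p\,dV(t),\qquad\int_{\tilde\Om}\tilde u^q\,dx=\int_0^{\tilde\delta}\phi(t)^q\,dV(t)+\phi(\tilde\delta)^q\bigl(|\tilde\Om|-V(\tilde\delta)\bigr).
\]
Two successive integrations by parts, in which the boundary contributions cancel \emph{exactly} by the twin identities $\phi'(\tilde\delta)=0$ and $W(\tilde\delta)=|\tilde\Om|$ (so that the leftover terms $\phi(\tilde\delta)^q|\tilde\Om|$ disappear), combined with the concavity $-\phi''\ge 0$ (which allows the safe replacement of $V$ by $W$), then produce
\[
\int_{\tilde\Om}|\nabla\tilde u|^p\,dx\leq\int_0^{\tilde\delta}\phi'(t)^p(2\tilde L+2N\pi t)\,dt,\qquad\int_{\tilde\Om}\tilde u^q\,dx\geq\int_0^{\tilde\delta}\phi(t)^q(2\tilde L+2N\pi t)\,dt.
\]
Finally, estimating the common weight by $2\tilde L\leq 2\tilde L+2N\pi t\leq 2(\tilde L+N\pi\tilde\delta)$ decouples the two integrals into the 1D ratio recalled above; a short simplification delivers the desired isotropic bound, which on reverting the change of variable is exactly \eqref{estb}.

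The main obstacle is the delicate sign-tracking in the two integrations by parts: the substitution $V\leq W$ is legitimate only because the resulting integrand is non-negative, and this rests jointly on the monotonicity ($\phi'\geq 0$) and concavity ($-\phi''\geq 0$) of $\phi$. Equally crucial is that the boundary contributions cancel \emph{exactly}—not up to a residual term—which happens only because of the twin identities $\phi'(\tilde\delta)=0$ and $W(\tilde\delta)=|\tilde\Om|$, the latter being precisely what forces $\tilde\delta$ to satisfy the quadratic equation in \eqref{delta}.
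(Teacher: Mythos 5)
Your proposal is correct and follows essentially the same route as the paper: reduce to the isotropic case via the change of variables $y=A^{-1/2}x$, test with the one-dimensional eigenfunction of \eqref{PS1} composed with (a truncation of) the distance function to $A^{-1/2}\Sigma$, and combine the Steiner tube estimate with the defining quadratic equation for $\delta$ to decouple the one-dimensional ratio. The only difference is that you carry out explicitly (coarea formula, two integrations by parts using $\phi'(\tilde\delta)=0$, $W(\tilde\delta)=|\tilde\Omega|$, monotonicity and concavity) the computation that the paper imports from \cite[Equation~(2.11)]{tilzuc1}, and your explicit truncation $d\wedge\tilde\delta$ handles cleanly the point where the paper asserts a bound on $d_{B\Sigma}$.
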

\begin{proof}
Set $B = A^{-\frac{1}{2}}$ and change variable $y = Bx$ in the
two integrals of \eqref{PS} so that
\[
\sobn(\Omega,\Sigma)=\bigl(\det A^{\frac12}\bigr)^{\frac pq-1}\max_{v\in W_{B\Sigma}(B\Omega)\setminus\{0\}}\frac{\left(\int_{B\Omega} |v(y)|^qdy\right)^{\frac{p}{q}}}{\int_{B\Omega} |\nabla v(y)|^pdy},
\]
where $B\Omega$ and $B\Sigma$ are the images of $\Omega$ and $\Si$ through $B$. The above ratio is the same as the one considered in \cite{tilzuc1}, but with numerator and denominator's powers decoupled. We may follow the same trick and test with the function $$v(y)=u_1\left(\frac{\mathop{d_{B\Sigma}}(y)}{2\delta_B}\right),\quad y\in B\Om,$$ 
where $\mathop{d_{B\Sigma}}$ is the distance function to $B\Sigma$, $u_1$ is the solution to \eqref{PS1} and 
\begin{equation*}
\delta_B:=\dfrac{|B\Omega|}
 {\cH(B\Si)+({\cH(B\Si)^2+N\pi|B\Omega|})^\frac{1}{2}}
\end{equation*}
is the positive root of the quadratic equation $2\cH(B\Sigma)\delta+N\pi\delta^2=|B\Omega|$ (see also \cite[Equation~(2.2)]{tilzuc1}).
Notice that, by definition, $d_{B\Sigma}(y)\leq 2\delta_B$ for all $y\in B\Om$ and this guarantees that the argument of $u_1$ is always bounded from above by $1/2$.
Therefore, similarly to \cite[Equation (2.11)]{tilzuc1} (see also \cite[Remark 2.5]{tilzuc1}), we find that
\begin{equation}\label{last}
\sobn(\Omega,\Sigma)\geq \bigl(\det A^{\frac12}\bigr)^{\frac pq-1}\frac{(2\cH(B\Sigma))^\frac{p}{q}(2 \delta_B)^{p+\frac{p}{q}-1}}{2\cH(B\Sigma)+2N\pi \delta_B} \frac{\left(\int_0^{\frac12} u_1(t)^q dt\right)^{\frac{p}{q}}}{\int_0^{\frac{1}{2}} u_1'(t)^pdt}.
\end{equation}
As noticed in the proof of \cite[Theorem~2.1]{tilzuc2}, by the area formula we have $\cH(B\Sigma)=(\mathrm{det} A^{-\frac{1}{2}})\cHa(\Sigma)$, and moreover $|B\Omega|=(\mathrm{det} A^{-\frac12})|\Omega|$. Then $\delta_B=\delta$ as defined in \eqref{delta} and combined with \eqref{one} implies that \eqref{last} is equivalent to \eqref{estb}.
\end{proof}

The following upper bound for Poincar\'e-Sobolev constants of trapezoids will be at the basis of the construction of the recovering sequence, for the $\Gamma$-limsup inequality \eqref{gammasup2}. It turns out that it is asymptotically sharp whenever the height as well as the lateral sides (legs) of the trapezoid converge to zero. 

\begin{theorem}[Upper bound for Poincar\'e-Sobolev constants of trapezoids]\label{t.thin}
Let $E\subset\Omega$ be a trapezoid of height $h$ in the direction of the unit vector $\xi\in \mathbb R^2$, and with lateral sides (legs) of lengths $k_1$ and $k_2$.
%contained in a rectangle $$R=\{x\in\R^2\,|\,\, 0< x\cdot\xi <h, \, 0<x\cdot \tau< k\}$$ of side-length $h>0$ with direction $\xi$ ($|\xi|=1$) and side-length $k>0$ with direction $\tau$ ($|\tau|=1$), orthogonal to $\xi$ such that $|R|\leq |E|+\varepsilon$.
If the coefficient matrix $A$ is constant and $f=1$, then
\begin{equation*}
\sobn(E)<\spq \left(|E|+\dfrac {\sqrt{a_{\max}}(k_1+k_2)h}{2|A\xi\cdot \xi|^{\frac{1}{2}}}\right)^{\frac pq-1}
\left(\frac{h}{|A\xi\cdot \xi|^\frac{1}{2}}\right)^{p}.
\end{equation*}

%for some constant $C$ only depending on $A$, $k_1$ and $k_2$.
\end{theorem}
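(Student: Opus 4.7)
The plan is to mimic the change of variables from the proof of Theorem~\ref{t.lower}, reducing to the identity-matrix case, and then to dominate the resulting Poincar\'e\,-\,Sobolev constant by that of a circumscribed rectangle, for which Remark~\ref{rem.spq} furnishes an explicit formula having exactly the shape of the target bound. Setting $B=A^{-1/2}$ and changing variables $y=Bx$ in \eqref{PS} yields
\[
\sobn(E)=(\det A^{1/2})^{\frac{p}{q}-1}\,C^{\mathrm{id}}(BE,\partial BE),
\]
where $C^{\mathrm{id}}(F,\Gamma)$ denotes the Poincar\'e\,-\,Sobolev constant of $F$ with Dirichlet on $\Gamma$ computed using the identity matrix in place of $A$ and $f=1$. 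Since $B$ is linear, $BE$ is again a trapezoid: its parallel sides are perpendicular to $A^{1/2}\xi$ at Euclidean distance $\tilde h:=h/|A\xi\cdot\xi|^{1/2}$ apart, and its legs (images of the unit-direction leg segments $v_i$ of $E$) have lengths $\tilde k_i=|A^{-1/2}v_i|\,k_i\le k_i/\sqrt{a_{\min}}$.

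Next I would enclose $BE$ in the smallest rectangle $R'$ whose sides are parallel or perpendicular to $A^{1/2}\xi$: it has height $\tilde h$, and its two parallel sides $l_1,l_2$ at mutual distance $\tilde h$ contain those of $BE$. Applying the monotonicity of Remark~\ref{r.mon} with $\Omega_1=BE$, $\Sigma_1=\partial BE$, $\Omega_2=R'$ and $\Sigma_2=l_1\cup l_2$, and then Remark~\ref{rem.spq}, gives
\[
C^{\mathrm{id}}(BE,\partial BE)\le C^{\mathrm{id}}(R',l_1\cup l_2)=\spq\,|R'|^{\frac{p}{q}-1}\,\tilde h^{\,p}.
\]

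It remains to bound $|R'|$. Parametrising the two parallel sides of $BE$ by intervals $[\alpha_i,\beta_i]$ in the direction orthogonal to $A^{1/2}\xi$, an elementary computation gives
\[
|R'|-|BE|=\tilde h\cdot\frac{|\alpha_1-\alpha_2|+|\beta_1-\beta_2|}{2}<\tilde h\cdot\frac{\tilde k_1+\tilde k_2}{2},
\]
with strict inequality from $\sqrt{x^2+\tilde h^2}>|x|$ applied to each leg (since $\tilde h>0$). Multiplying through by $\det A^{1/2}$ and using $\det A^{1/2}|BE|=|E|$ together with the identity $\det A^{1/2}/\sqrt{a_{\min}}=\sqrt{a_{\max}}$ turns this into
\[
\det A^{1/2}\,|R'|<|E|+\frac{\sqrt{a_{\max}}\,(k_1+k_2)\,h}{2|A\xi\cdot\xi|^{1/2}},
\]
which, inserted in the previous display, yields the stated inequality. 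The only non-routine step is the geometric estimate of $|R'|-|BE|$ coupled with the careful tracking of how heights, leg lengths and area transform under $B$; the remainder is bookkeeping in $p/q$ powers and determinants.
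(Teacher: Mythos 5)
Your proposal is correct and follows essentially the same route as the paper's proof: the substitution $y=A^{-1/2}x$, enclosure of the image trapezoid in the smallest rectangle aligned with $A^{1/2}\xi$, Remark~\ref{r.mon} plus the explicit rectangle formula of Remark~\ref{rem.spq}, and the bound $|R\setminus BE|$ via the transformed legs together with $\det A^{1/2}/\sqrt{a_{\min}}=\sqrt{a_{\max}}$. The only cosmetic difference is where strictness is obtained (you get it from $\sqrt{x^2+\tilde h^2}>|x|$ in the area estimate, the paper from the strict inclusion $l_1\cup l_2\subsetneq\partial R$), and both give the stated strict inequality.
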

\begin{proof}
Let $\tau_1,\tau_2\in\mathbb R^2$ be the vectors that induce the lateral sides of $E$ of modulus $k_1,k_2$. Set
$B = A^{-\frac{1}{2}}$ and change variable $y = Bx$ in the
two integrals of \eqref{PS} so that
\[
\sobn(E)=\bigl(\det A^{\frac12}\bigr)^{\frac pq-1}\max_{u\in W_{B\partial E}(BE)\setminus\{0\}}\frac{\left(\int_{BE} |u(y)|^qdy\right)^{\frac{p}{q}}}{\int_{BE} |\nabla u(y)|^pdy},
\]
where $BE$ and $B\partial E$ are the images of $E$ and $\partial E$ through $B$. Since by definition $E$ is included in the strip $\{x\in\R^2\,|\,\, 0< x\cdot\xi <h\}$, then $BE$ is still a trapezoid, contained in the
strip $\{y\in\R^2\,|\,\, 0< y\cdot A^{1/2}\xi <h\}$,
so that its hight is given by
\begin{equation}
\label{newwidth}
h_B=\dfrac {h}{\left|
A^{1/2}\xi\right|}
=
\dfrac {h}{|A\xi\cdot \xi|^{\frac{1}{2}}},
\end{equation}
see Figure~\ref{fig.trapezoid}.

The previous maximum is the Poincar\'e-Sobolev constant associated to the identity matrix and can be estimated by considering the smallest open rectangle $R$ containing $BE$. Indeed, if $l_1, l_2$ are those sides of $R$ orthogonal to the vector $A^{1/2}\xi$, then 
by Remarks~\ref{r.mon} and \ref{rem.spq}  
we have
\[
\sobn(E)< \bigl(\det A^{\frac12}\bigr)^{\frac pq-1} \sobn(R,l_1\cup l_2)=
\spq\bigl(\det A^{\frac12}\bigr)^{\frac pq-1} |R|^{\frac{p}{q}-1}({h_B})^{p}.
\]
(where the strict inequality follows from the strict inclusion of the boundary conditions $l_1\cup l_2\subsetneq \partial R$).
Now, to estimate the area of $R$ we may write $|R|=|BE|+|R\setminus BE|$ and, by using $|BE|=(\mathrm{det} A^{-\frac12})|E|$, 
obtain
%while $|R\setminus BE|$ we prove the theorem. 
\begin{equation*}%\label{kjhj}
\sobn(E)< \spq (|E|+\det A^{\frac12}|R\setminus BE|)^{\frac{p}{q}-1}({h_B})^{p}.
\end{equation*}
To prove the theorem it remains to estimate the area of $R\setminus BE$. Since this set is the union of two triangles we have
$$|R\setminus BE|< \frac{|B\tau_1|h_B}{2}+\frac{|B\tau_2|h_B}{2}\leq \dfrac {(k_1+k_2)h}{2\sqrt{a_{\min}}|A\xi\cdot \xi|^{\frac{1}{2}}},$$
where the last inequality is an equality whenever $\tau_1$ and $\tau_2$ are parallel to the eigenvector corresponding to the maximal eigenvalue $1/\sqrt{a_{\min}}$ of $B$. This conclude the proof.
\end{proof}

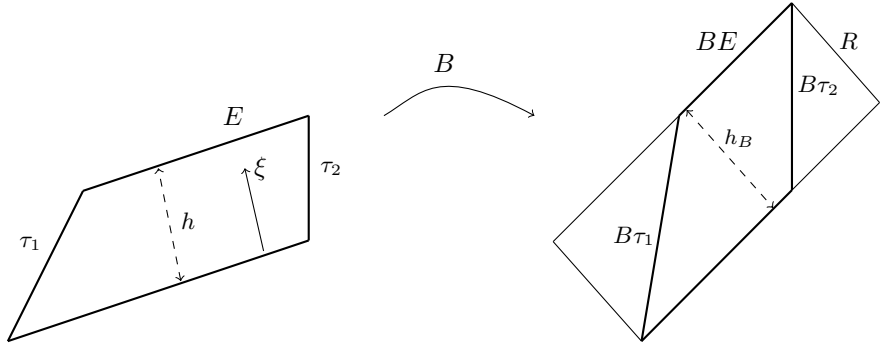
\begin{figure}
\centering
\subfigure{
\begin{tikzpicture}
  \draw[thick] (0,0)--(4,1.34);
  \draw[thick] (0,0)--(1,2);
  \draw[thick] (4,1.34)--(4,3);
  \draw[thick] (1,2)--(4,3);
  \node at (0.3,1.3) {\small $\tau_1$}; 
  \node at (4.3,2.3) {\small $\tau_2$}; 
  \node at (3,3)  {$E$};
  \draw[->] (5,3) .. controls (5.5,3.3) and (5.7,3.7) .. (7,3);
  \node at (5.8,3.7)  {$B$};
  \draw[<->, dashed] (2.3,0.8)--(2,2.3);
  \node at (2.4,1.6)  {\small $h$};
  \draw[->] (3.4,1.2) -- (3.15,2.3) node[right] {$\xi$};
  \end{tikzpicture}}
  \subfigure{
\begin{tikzpicture}[scale=0.5]
  \draw[thick] (0,0)--(4,4.02);
  \draw[thick] (0,0)--(1,6);
  \draw[thick] (4,4.02)--(4,9);
  \draw[thick] (1,6)--(4,9);
  \node at (2,8)  {$BE$};
  \draw[<->, dashed] (1.2,6.15)--(3.5,3.55);
  \node at (2.6,5.4)  {\footnotesize $h_B$};
  \draw[-,thin] (4,9)--(6.33,6.35);
  \draw[-,thin] (-2.35,2.65)--(0,0);
  \draw[-,thin] (-2.35,2.65)--(1,6);
  \draw[-,thin] (4,4.02)--(6.33,6.35);
  \node at (-0.2,2.8) {\small $B\tau_1$}; 
  \node at (4.7,6.8) {\small $B\tau_2$}; 
  \node at (5.5,8) {\small $R$}; 
  \end{tikzpicture}}
  \caption{A trapezoid and its image through the linear map $B$.}\label{fig.trapezoid}
 \end{figure}

\begin{remark}[An estimate with the area and the width]\label{rmkhkhkj}
It would be interesting to investigate whether it holds the stronger scale-invariant inequality:
\[
C(E)<c_{p,q} |E|^{\frac{p}{q}-1}\big(\text{width}(E)\big)^p,
\]
where $\text{width}(E)$ stands for the \emph{minimal width} of an arbitrary set $E$ %Notice that, in the sub-homogeneous case, the upper bound in Theorem~\ref{t.thin} is also affected by the measure of the domain 
(see \cite{tilzuc2} for a proof in the case $p=q=2$).% studied inwhere only the width of the domain $E$ appeared).
\end{remark}

We now recall the tile construction given in \cite{tilzuc2}.

\begin{definition}[Tile in the unit square]\label{d.tile}
Given $n\in\mathbb N$, let $\varepsilon_j>0$, $\beta_j>0$ and $\xi_j\in {\mathbb S}^1$ for all $1\leq j\leq n$. 
Slice the unit square $Y:=(0,1)\times (0,1)$ into $n$
stacked rectangles $Y_j$
($1\leq j\leq n$) of size $1\times h_j$,
the height $h_j$ being defined as
\begin{equation*}
h_j:=\dfrac{\beta_j \varepsilon_j
}{\sum_{j=1}^n \beta_j \varepsilon_j},
\end{equation*}
(clearly $\sum h_j=1$ so that the heights of
the $n$ rectangles match the height of $Y$).
Then, by drawing inside every $Y_j$ a maximal family of parallel line segments, orthogonal to $\xi_j$ and equally spaced a
distance  of $\eps_j$ apart from one another we further slice every rectangle $Y_j$ into several polygons
of width $\eps_j$. Let $K_j$ denote the union of all these line segments, orthogonal to $\xi_j$,
drawn inside $Y_j$. 

The \emph{tile} $T$ is then the compact and connected set defined as
\begin{equation*}
T:=R\cup S, \text{ with }
R:=\bigcup_{j=1}^n \partial Y_{j},\text{ and }\, S:=\bigcup_{j=1}^n K_{j}.
\end{equation*}
Notice also that $T\supset \partial Y$ since $\partial Y\subset R$ (see Figure~\ref{f.ex} for two examples).
\end{definition}

\begin{figure}
\centering

\subfigure%[A tile with $n=3$.]
{
\begin{tikzpicture}[scale=0.45]
\rectangleA
\draw[->,thick,black] (6.6,7.9)--(5.6,8.5) node[left, below] {$\xi_1$} ; %VETTORI
\draw[->,thick,black] (5,3.9)--(5.5,5) node[midway,right] {$\xi_2$};
\draw[->,thick,black] (4.8,0.7) --(5.7,1.5) node[right, below] {$\xi_3$} ;
\node at (-0.7,8) {$Y_1$} ;
\node at (-0.7,4.5) {$Y_2$} ;
\node at (-0.7,1) {$Y_3$} ;
\end{tikzpicture}
}
\quad
\subfigure%[A tile with $n=3$.]
{
\begin{tikzpicture}[scale=0.45]

\rectangleB
\draw[->,thick,black] (2.9,8.8)--(3.9,9.5) node[right] {$\xi_1$} ; %VETTORI
\draw[->,thick,black] (4.5,5.9)--(5.3,5.7) node[right] {$\xi_2$};
\draw[->,thick,black] (5,1.5) --(5,2.5) node[midway, left] {$\xi_3$} ;
\draw[->,thick,black] (4,0.5) --(5,0.5) node[right] {$\xi_4$} ;
\node at (-0.7,9) {$Y_1$} ;
\node at (-0.7,6) {$Y_2$} ;
\node at (-0.7,2.5) {$Y_3$} ;
\node at (-0.7,0.5) {$Y_4$} ;
\end{tikzpicture}
}
\caption{Examples of tiles in the unit square.}\label{f.ex}
\end{figure}
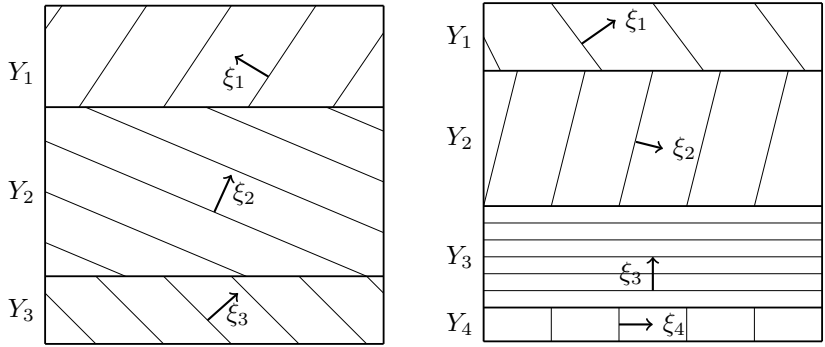

In the last result of this section we construct a fundamental pattern, whose
periodic homogenization inside $\Omega$ will be the main ingredient
in the construction of the recovery sequence, for the $\Gamma$-limsup inequality  \eqref{gammasup2}.

\begin{proposition}[Asymptotic bound for Poincar\'e-Sobolev constants]\label{p.tile}
Let $Q\subset~\R^2$ be an open
square with sides parallel to the coordinate axes, and
let $\nu$ be a probability measure over ${\mathbb S}^1$.
If the coefficient matrix $A$ is constant and $f=1$ then
for every length $\ell$ large enough, there exists a closed and connected set $\Sigma_\ell\in
{\mathcal A}_\ell(Q)$ with 
$\partial Q\subset\Sigma_\ell$ and
 $\lim_{\ell\to \infty}{\cH(\Sigma_\ell)}/{\ell}=1$ such that the varifolds associated with $\Sigma_\ell$ converge
to $|Q|^{-1}\chi_Q \otimes \nu$, that is, 
\begin{equation*}
\lim_{\ell\to\infty} \frac 1{\cH(\Sigma_\ell)}
\!\int_{\Sigma_\ell} \varphi(x,\xi_\ell(x))\,d\cH(x)\! = \!\frac
1 {|Q|}\!\int_Q\!\int_{{\mathbb
S}^1}\varphi(x,y)\,d\nu(y)\,dx,\forall\varphi\!\in\! C_\text{sym}(\overline{Q}\times {\mathbb S}^1)
\end{equation*}
where $\xi_\ell$ is any measurable selection of the unit normal to
$\Sigma_\ell$.
Moreover, 
\begin{equation}\label{stimabase}
 \limsup_{\ell\to \infty} \ell^p\sobn(Q,\Sigma_\ell)
\leq \frac{\spq |Q|^{{p}+\frac{p}{q}-1}}{\big(\int_{\mathbb{S}^1}
|Ay\cdot y|^{\frac12}\, d\nu(y) \big)^p}.
\end{equation}
\end{proposition}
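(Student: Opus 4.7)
The plan is to adapt the tile construction of \cite{tilzuc2} to the sub-homogeneous regime, using Proposition~\ref{p.disconnected} (to decompose the Poincar\'e-Sobolev constant on the complement of $\Sigma_\ell$) and Theorem~\ref{t.thin} (to estimate each piece) as the key analytic inputs. First, approximate $\nu$ by atomic measures $\nu_n=\sum_{j=1}^{M_n}\beta_j\delta_{\xi_j}$ with $\nu_n\weak\nu$. For fixed $n$, let $\{\varepsilon_j\}$ be positive parameters (optimized later) and $\eta>0$ a mesh size; build the tile $T$ of Definition~\ref{d.tile} with these data, partition $Q$ into $\approx|Q|/\eta^2$ sub-squares of side $\eta$, place a rescaled-by-$\eta$ copy of $T$ in each, and let $\Sigma_{\eta,\varepsilon}$ be their union together with $\partial Q$. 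Since $T\supset\partial Y$, adjacent scaled tiles glue along their shared sub-square edges, so $\Sigma_{\eta,\varepsilon}$ is closed and connected. A direct count gives $\cH(\Sigma_{\eta,\varepsilon})=|Q|/(\eta\sum_k\beta_k\varepsilon_k)+O(M_n|Q|^{1/2}/\eta)$; choosing $\eta=\eta(\ell,n)$ appropriately enforces $\cH(\Sigma_{\eta,\varepsilon})\leq\ell$ with $\cH(\Sigma_{\eta,\varepsilon})/\ell\to 1$. Periodic homogenization of each strip-$j$'s equispaced $\xi_j$-orthogonal lines then yields $\theta_{\Sigma_{\eta,\varepsilon}}\weak|Q|^{-1}\chi_Q\otimes\nu_n$ as $\eta\to 0$, independently of the choice of $\varepsilon_j$'s.

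The crux is the bound on $\sobn(Q,\Sigma_{\eta,\varepsilon})$. Since $\Sigma_{\eta,\varepsilon}$ contains $\partial Q$, every sub-square boundary and every internal line segment, the complement $Q\setminus\Sigma_{\eta,\varepsilon}$ splits into parallelograms/trapezoids $E$, each contained in a unique scaled strip of direction $\xi_j$ with height $h=\eta\varepsilon_j$ in direction $\xi_j$ and legs of length $O(\eta h_j)$. Iterating Proposition~\ref{p.disconnected} gives $\sobn(Q,\Sigma_{\eta,\varepsilon})^{q/(p-q)}=\sum_E\sobn(E)^{q/(p-q)}$, and Theorem~\ref{t.thin} yields $\sobn(E)<\spq(|E|+\mathrm{err}_E)^{p/q-1}(\eta\varepsilon_j/|A\xi_j\cdot\xi_j|^{1/2})^p$ with relative error $\mathrm{err}_E/|E|=O(\varepsilon_j/h_j)\to 0$. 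The identity $(p/q-1)\cdot q/(p-q)=1$ linearizes the sum, turning $\sum_{E\subset\text{strip }j}|E|$ into the total area $|Q|\beta_j$. Combined with $\eta\sim|Q|/(\ell\sum_k\beta_k\varepsilon_k)$, this gives
\[
\ell^p\sobn(Q,\Sigma_{\eta,\varepsilon})\leq\spq|Q|^{p+p/q-1}\Bigl(\sum_k\beta_k\varepsilon_k\Bigr)^{-p}\Bigl(\sum_j\beta_j\bigl(\varepsilon_j|A\xi_j\cdot\xi_j|^{-1/2}\bigr)^{pq/(p-q)}\Bigr)^{(p-q)/q}(1+o(1)).
\]

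To finish, optimize the right-hand side in $\{\varepsilon_j\}$, noting that it is scale-invariant in $\varepsilon$. A Lagrange computation with $s=pq/(p-q)$ gives the minimizer $\varepsilon_j\propto|A\xi_j\cdot\xi_j|^{s/(2(s-1))}$ and minimum value $\spq|Q|^{p+p/q-1}/\bigl(\sum_j\beta_j|A\xi_j\cdot\xi_j|^{r/2}\bigr)^{p/r}$ with $r=pq/(pq-p+q)$. Since $p>q$ implies $r>1$, the power-mean inequality gives $\bigl(\sum_j\beta_j|A\xi_j\cdot\xi_j|^{r/2}\bigr)^{1/r}\geq\sum_j\beta_j|A\xi_j\cdot\xi_j|^{1/2}=\int_\sfera|Ay\cdot y|^{1/2}d\nu_n(y)$, so the minimum is dominated by $\spq|Q|^{p+p/q-1}/\bigl(\int_\sfera|Ay\cdot y|^{1/2}d\nu_n(y)\bigr)^p$. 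A standard diagonal selection $n=n(\ell)$ then extracts from this two-parameter family a single sequence $\Sigma_\ell$ satisfying both the required varifold convergence and \eqref{stimabase}, using $\nu_n\weak\nu$ together with the continuity of $y\mapsto|Ay\cdot y|^{1/2}$. The main obstacle is the bookkeeping of boundary trapezoids at the edges of each sub-square, where the $(k_1+k_2)h$ correction in Theorem~\ref{t.thin} must be absorbed into the $o(1)$ error by exploiting $\varepsilon_j/h_j\to 0$.
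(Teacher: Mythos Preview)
Your approach coincides with the paper's: approximate $\nu$ by atomic measures, build the tile of Definition~\ref{d.tile}, homogenize it inside $Q$, decompose $\sobn(Q,\Sigma_\ell)$ via Proposition~\ref{p.disconnected}, bound each trapezoid with Theorem~\ref{t.thin}, and finish by a diagonal argument. The only conceptual difference is that the paper fixes $\varepsilon_j=\varepsilon\,|A\xi_j\cdot\xi_j|^{1/2}$ from the outset, whereas you keep the $\varepsilon_j$'s free and optimize at the end.

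There is, however, a slip in your area bookkeeping that propagates through the optimization. In Definition~\ref{d.tile} the height of the $j$-th strip is $h_j=\beta_j\varepsilon_j/\sum_k\beta_k\varepsilon_k$, so after homogenization $\sum_{E\subset\text{strip }j}|E|=|Q|\,h_j$, not $|Q|\,\beta_j$ (the two agree only when all $\varepsilon_j$ are equal). Carrying the correct $h_j$ through, your displayed bound becomes, with $s=pq/(p-q)$,
\[
\ell^p\,\sobn(Q,\Sigma_{\eta,\varepsilon})\leq \spq\,|Q|^{p+\frac pq-1}
\biggl(\frac{\sum_j\beta_j\,\varepsilon_j^{\,s+1}\,|A\xi_j\cdot\xi_j|^{-s/2}}{\bigl(\sum_k\beta_k\varepsilon_k\bigr)^{s+1}}\biggr)^{\!\frac{p-q}{q}}(1+o(1)).
\]
Minimizing this scale-invariant ratio (Lagrange, under $\sum_k\beta_k\varepsilon_k=1$) yields $\varepsilon_j\propto|A\xi_j\cdot\xi_j|^{1/2}$, which is exactly the paper's choice, and the minimum value is $\bigl(\sum_j\beta_j|A\xi_j\cdot\xi_j|^{1/2}\bigr)^{-p}$. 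Thus \eqref{stimabase} for $\nu_n$ follows \emph{directly}, with no power-mean inequality needed; the appearance of that extra step in your argument is a symptom of the area miscount. A minor related point: the varifold convergence to $|Q|^{-1}\chi_Q\otimes\nu_n$ is not ``independent of the $\varepsilon_j$'s'' as you write, since one also needs $\varepsilon_j\to 0$ (otherwise the grid part $R$ of the tile carries a non-negligible fraction of the length); this is harmless once $\varepsilon_j=\varepsilon\,|A\xi_j\cdot\xi_j|^{1/2}$ with $\varepsilon\to 0$ slower than $1/\ell$, e.g.\ $\varepsilon\sim\ell^{-2/3}$ as in the paper.
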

\begin{proof} 
The sets $\Sigma_\ell$ will be obtained as the periodic homogenization,
inside the square $Q$, of suitably rescaled fundamental tiles $T_\eps$, for some small parameter $\varepsilon$,
initially constructed inside a unit square, see Definition~\ref{d.tile}. This construction was already employed in \cite[Proposition~4.1]{tilzuc2} where the weak-* convergence of the varifolds has been also proved. Therefore, we only have to prove \eqref{stimabase}.
We initially assume that the measure $\nu$ is \emph{purely atomic}, that is $\nu=\sum_{j=1}^n \beta_j \delta_{\xi_j}$
for suitable weights $\beta_j>0$ such that $\sum_{j=1}^n \beta_j=1$, $n\geq 1$ and unit vectors $\xi_j\in {\mathbb S}^1$ (here the number $\beta_j$ has the same meaning as in Definition~\ref{d.tile}).
With this notation, we have
\begin{equation*}
I:=\int_{{\mathbb S}^1}|A\xi\cdot \xi|^\frac12\,d\nu(\xi)=\sum_{j=1}^n \beta_j |A\xi_j\cdot \xi_j|^\frac12.
\end{equation*}
We consider the tile $T_\varepsilon$, according to Definition~\ref{d.tile}, where the values of the $\eps_j$s shall be fixed according to
\begin{equation}
\label{defepsj}
\eps_j:=\eps |A\xi_j\cdot \xi_j|^\frac{1}{2},\quad
1\leq j\leq n
\end{equation}
(where $\eps\ll 1$ is a scale parameter to be tuned later)
so that, by construction, every connected component of $Y\setminus T_\eps$ is, up to \emph{at most four} exceptions\footnote{Due to the corners of $Y_j$, a connected component may degenerate into a triangle, a pentagon and also into an hexagon (this is the case when the line segments are almost parallel to a diagonal of $Y_j$ and one polygon touches two opposite corners of $Y_j$).}, a trapezoid of height $\varepsilon_j$ in the direction $\xi_j$ for some $j\in\{1,\ldots,n\}$ and with lateral sides of lengths $c^1_j \varepsilon_j$ and $c^2_j  \varepsilon_j$ for suitable constants $c^1_j,c^2_j$ only depending on the direction $\xi_j$ (i.e., they are the tangents of the angle made up of $\xi$ and $\partial Y_i$). Notice also that for fixed $j$, the number of polygons (and thus of trapezoids) inside $Y_j$  is $O(1/\varepsilon_j)$. Then for a given $\gamma>0$ we can assume $\epsilon$ in \eqref{defepsj} so small so that the contributions of the constants on the exceptional connected components (i.e., those that are not trapezoids) are smaller than $\gamma$. These considerations, combined with \eqref{concomp1} and Theorem~\ref{t.thin}, imply
\begin{equation}
\label{ss6}
\begin{split}
\sobn(Y,T_\varepsilon)^\frac{q}{p-q}&<(\spq)^\frac{q}{p-q}
\sum_{j}\! \bigg(\frac{\varepsilon_j}{|A\xi_j\cdot \xi_j|^\frac{1}{2}}\bigg)^{\frac{pq}{p-q}}\bigg(\sum_i|E_{i,j}|+\frac{c_j\varepsilon_j}{|A\xi_j\cdot \xi_j|^\frac{1}{2}}\bigg)\!\!+\gamma\\
&<(\spq)^\frac{q}{p-q}\sum_{j} \bigg(\frac{\varepsilon_j}{|A\xi_j\cdot \xi_j|^\frac{1}{2}}\bigg)^{\frac{pq}{p-q}}\bigg(|Y_j|+\frac{c_j\varepsilon_j}{|A\xi_j\cdot \xi_j|^\frac{1}{2}}\bigg)\!+\gamma\\
&=(\spq)^\frac{q}{p-q}
\varepsilon^{\frac{pq}{p-q}}(1+nc_{\max}\varepsilon)+\gamma
\end{split}
\end{equation}
where we denoted by $\{E_{i,j}\}$ the family of trapezoids inside $Y_j$, $c_j$ a costant only depending on the matrix $A$ and on the direction of the vector $\xi_j$, $c_{\max}:=\max_j c_j$, and having used \eqref{defepsj} in the last passage. 

Now let $t$ denote the side length of the square $Q$. Given an integer $m\geq 1$ we may fit $m^2$ copies of the rescaled tile $(t/m)T_\eps$ inside $\overline Q$
as in an $m\times m$ checkerboard: the resulting tiling is then $1/m$--periodic in the two directions parallel to the sides of $Q$. We denote by $T_{\eps,m}$ the union of these $m^2$ rescaled tiles: this set is connected, because so is $T_\eps$
and,  by Definition~\ref{d.tile}, each tile shares a side of length $t/m$ with each neighbor.
The sets $\Sigma_\ell$ we want to construct are defined, for large $\ell$, as $\Sigma_\ell:=T_{\eps,m}$, with $\eps=\eps(\ell):=(t/\ell)^{\frac 2 3}$ and $m=m(\ell):=\lceil (\ell/t)^{\frac 1 3}I \rceil$,
(observe that $\eps\to 0$ and $m\to\infty$ when $\ell\to\infty$). Therefore, by construction, \eqref{concomp1} and Remark~\ref{r.scaling} we obtain
\[
\sobn(Q,\Sigma_\ell)^\frac{q}{p-q}=m^2\sobn\left(({1}/{m})Q,T_{\varepsilon,m}\right)^{\frac{q}{p-q}}=m^{-\frac{pq}{p-q}}t^{\frac{pq}{p-q}+2}\sobn(Y,T_\varepsilon)^{\frac{q}{p-q}}.
\]
From \eqref{ss6} we then obtain
\[
\sobn(Q,\Sigma_\ell)^\frac{q}{p-q}\leq (\spq)^\frac{q}{p-q}\frac{t^{2\frac{pq}{p-q}+2}}{(\ell I)^{\frac{pq}{p-q}}}(1+c_{\max}(t/\ell)^{2/3}) + \gamma,
\]
and raising to the power $(p-q)/q$, letting $\ell\to \infty$ then using the arbitrariness of $\gamma$, gives \eqref{stimabase} for purely atomic measures (recall that $t^2=|Q|$ is the area of the square $Q$). The inequality \eqref{stimabase} for arbitrary probability measures $\nu$ over ${\mathbb S}^1$ can be obtained via a standard diagonal argument  (see Step~3 in \cite[Proposition~4.1]{tilzuc2}).
\end{proof}

\section{The $\Gamma$-liminf inequality}\label{sec3}

This section is devoted to proving that the $\Gamma$-liminf functional is
not smaller than the functional $F_\infty$.
\begin{proposition}[$\Gamma$-liminf inequality]
For every  varifold $\theta\in\vari$ and every sequence $\{\theta_L\}\subset\vari$ such that $\theta_L\weak\theta$, it holds
\begin{equation}\label{gammainf}
  \liminf_{L\to+\infty} F_L(\theta_L)\geq F_\infty(\theta).
\end{equation}
\end{proposition}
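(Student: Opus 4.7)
The plan is to localize \eqref{PS} on a fine grid where $f,A$ are essentially constant, apply the sharp asymptotic lower bound of Theorem~\ref{t.lower} on each cell, and recognize the resulting sum as a Riemann sum converging to $F_\infty(\theta)$. Without loss of generality I may assume $\liminf_L F_L(\theta_L)<+\infty$ along a (non-relabeled) subsequence, so that $\theta_L=\theta_{\Si_L}$ for some $\Si_L\in\Alo(\Om)$ and $\sup_L L^p\sobn(\Om,\Si_L)<+\infty$; otherwise \eqref{gammainf} is trivial.

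For a large integer $M$, partition $\Om$ into disjoint open squares $\{Q_i\}$ of side $1/M$, discarding those meeting $\partial\Om$ (their contribution vanishes with $M$). Pick constants $f_i$ and positive-definite $A_i$ that approximate $f,A$ uniformly on $Q_i$, chosen with the ``right'' sign so that $\sobn(Q_i;f,A)\ge \sobn(Q_i;f_i,A_i)-o_M(1)$ via monotonicity of the Rayleigh quotient in $f,A$. The monotonicity of Remark~\ref{r.mon} applied with $\Om_1=\bigcup_i Q_i\subset\Om$ and $\Sigma_1=\bigcup_i\partial Q_i\cup(\Si_L\cap\Om_1)$, combined with the locality formula of Proposition~\ref{p.disconnected} iterated over the $Q_i$'s, then yields
\begin{equation*}
\sobn(\Om,\Si_L)^{\frac{q}{p-q}}\ \geq\ \sum_i \sobn\bigl(Q_i,\partial Q_i\cup(\Si_L\cap Q_i)\bigr)^{\frac{q}{p-q}}.
\end{equation*}
Since on each square the local data are constant, factoring out the scaling factor $f_i^{p/q}$ (homogeneity of \eqref{PS} in a constant $f$) and applying Theorem~\ref{t.lower} to $\sobn(Q_i;1,A_i)$ gives
\begin{equation*}
\sobn\bigl(Q_i,\partial Q_i\cup(\Si_L\cap Q_i)\bigr)\ \gtrsim\ \spq\,f_i^{p/q}\,(2\delta_i)^{p+\frac{p}{q}-1}\,\frac{\mathcal{H}^1_{A_i}(\Si_L\cap Q_i)^{p/q}}{\mathcal{H}^1_{A_i}(\Si_L\cap Q_i)+N_i\pi\sqrt{\det A_i}\,\delta_i},
\end{equation*}
with $\delta_i$ the positive root of the quadratic in \eqref{delta} for $(Q_i,A_i)$ and $N_i$ the number of connected components of $\partial Q_i\cup(\Si_L\cap Q_i)$.

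Passing to the limit $L\to\infty$: testing the weak-* convergence $\theta_L\weak\theta$ against $(x,y)\mapsto\chi_{Q_i}(x)|A_iy\cdot y|^{1/2}\in\Csym$ (after smooth truncation at $\partial Q_i$) gives $L^{-1}\mathcal{H}^1_{A_i}(\Si_L\cap Q_i)\to\alpha_i:=\int_{Q_i\times\sfera}|A_iy\cdot y|^{1/2}d\theta$; then $\delta_i\sim|Q_i|/(2L\alpha_i)$, and a counting of the crossings of $\partial Q_i$ keeps $N_i\delta_i$ negligible compared to $\mathcal{H}^1_{A_i}(\Si_L\cap Q_i)$. Multiplying by $L^p$, summing over $i$ and taking the liminf produces
\begin{equation*}
\liminf_L L^p\sobn(\Om,\Si_L)\ \geq\ \spq\,\bigg(\sum_i f_i^{\frac{p}{p-q}}\frac{|Q_i|^{\frac{pq+p-q}{p-q}}}{\alpha_i^{\frac{pq}{p-q}}}\bigg)^{\frac{p-q}{q}}.
\end{equation*}
Writing $\alpha_i=\int_{Q_i}\big(\int_{\sfera}|A_iy\cdot y|^{1/2}d\nu_x(y)\big)d\mu(x)$ and applying Jensen's inequality with respect to the convex function $t\mapsto t^{-pq/(p-q)}$, together with the Lebesgue decomposition $d\mu=\rho\,dx+d\mu^{\mathrm s}$ (any singular part of $\mu$, or any zero set of $\rho$ with positive Lebesgue measure, forces the right-hand side to $+\infty$, in agreement with $F_\infty(\theta)$), identifies the sum as a Riemann sum for the integrand of \eqref{gammalimit}. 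Letting $M\to\infty$ and using uniform continuity of $f,A$ closes the bound. The most delicate step is the Jensen application: the negative exponent and the possible singularity of $\mu$ require careful bookkeeping, along with the auxiliary uniform estimate that $N_i\delta_i$ is negligible across cells.
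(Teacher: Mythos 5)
Your proof follows the same skeleton as the paper's (dyadic/grid decomposition, freezing $f$ and $A$ on each cell, superadditivity of $\sobn^{q/(p-q)}$ via Remark~\ref{r.mon} and Proposition~\ref{p.disconnected}, the per-cell lower bound of Theorem~\ref{t.lower}, and the varifold convergence to control the Riemannian lengths), but the decisive final step is wrong. Jensen's inequality for the convex function $\Phi(t)=t^{-pq/(p-q)}$ gives $\Phi$ of an average $\leq$ the average of $\Phi$; applied here it bounds your block sum $\sum_i f_i^{p/(p-q)}|Q_i|\,(\alpha_i/|Q_i|)^{-pq/(p-q)}$ from \emph{above} by (essentially) $\int f^{p/(p-q)}\big(\rho(x)\int_{\sfera}|A(x)y\cdot y|^{1/2}d\nu_x\big)^{-pq/(p-q)}dx$, whereas you need the block sum to dominate this integral in the limit $M\to\infty$. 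Uniform continuity of $f$ and $A$ cannot repair this, because the obstruction lies in $\mu$, which need not be continuous or absolutely continuous. The correct mechanism, and the one the paper uses, is: at grid scale $2^{-k}$ the block sum is $\int_E m_k\,dx$ for a piecewise constant $m_k$; by Lebesgue/Radon--Nikodym differentiation of the measure $E\mapsto\int_{E\times\sfera}|A y\cdot y|^{1/2}d\theta$ over shrinking squares, $m_k$ converges a.e.\ to $f(x)^{p/(p-q)}\big(\rho(x)\int_{\sfera}|A(x)y\cdot y|^{1/2}d\nu_x+\eps\big)^{-pq/(p-q)}$, and Fatou's lemma passes the liminf inside; the $+\eps$ in the denominator (removed at the very end by monotone convergence) is what handles cells carrying little or no mass of $\theta$. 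Note also that your parenthetical is incorrect: a singular part of $\mu$ does \emph{not} force anything to $+\infty$; it simply has zero Lebesgue-a.e.\ derivative and disappears in the differentiation limit, which is precisely why only the absolutely continuous density $\rho$ appears in \eqref{gammalimit}.

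There are secondary gaps as well. The varifold is normalized by $\cH(\Si_L)$, not by $L$, and only $\cH(\Si_L)\leq L$ is known for an arbitrary recovery of finite energy; hence your claimed limits $L^{-1}\mathcal{H}^1_{A_i}(\Si_L\cap Q_i)\to\alpha_i$ and $\delta_i\sim|Q_i|/(2L\alpha_i)$ are false in general. What is available (and suffices, because it enters with the right sign) is a one-sided bound, $\limsup_L \mathcal{H}^1_{A_i}(\Si_L\cap Q_i)/\cH(\Si_L)\leq\int_{\overline{Q_i}\times\sfera}|A_iy\cdot y|^{1/2}d\theta$, obtained with continuous cutoffs since $\theta$ may charge $\partial Q_i\times\sfera$. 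Your treatment of $N_i$ is also confused: since $\partial Q_i$ is included in the Dirichlet set and, for large $L$, $\Si_L\not\subset Q_i$, every connected component of $\Si_L\cap\overline{Q_i}$ reaches $\partial Q_i$, so the set is connected and $N_i=1$; a ``counting of crossings'' control on the number of components of $\Si_L\cap Q_i$ is neither justified nor needed. Finally, you implicitly use $\cH(\Si_L\cap Q_i)\to+\infty$ (for the asymptotics of $\delta_i$ and to make the $\partial Q_i$ contribution negligible); this must be argued, as in the paper, from $\sobn(\Om,\Si_L)\to0$ forcing $\Si_L\to\cOmega$ in the Hausdorff metric.
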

\begin{proof}
Passing if necessary to a subsequence (not relabelled), we may assume that the liminf is a {finite} limit.  By \eqref{functional} this implies that, for $L$ large enough, every $\theta_L$ is equal to some $\theta_{\Sigma_L}$
as defined in \eqref{concrete}, for a suitable  $\Sigma_L\in\mathcal A_L(\Omega)$.
Therefore, by \eqref{concrete} and \eqref{weak}, the weak-* convergence $\theta_L\weak \theta$ takes the form
\begin{equation}\label{varifold}
\lim_{L\to+\infty}\frac{1}{\cH(\Si_L)}\int_{\Si_L} \varphi(x,\xi_{\Si_L}(x))d\cH(x)
=
\int_{\Ots} \varphi
\,d\theta\quad
\forall
\varphi\in\mathcal C_{\text{sym}}(\overline{\Omega}\times {\mathbb S}^1),
\end{equation}
where $\xi_{\Sigma_L}$ is the unit normal to $\Sigma_L$.
Similarly, using \eqref{functional} and \eqref{gammalimit}, the claim in \eqref{gammainf} is, after taking powers with $q/(p-q)$,
equivalent to 
\begin{equation}
\label{toprove}
\lim_{L\to +\infty}{L^{\frac{pq}{p-q}}}\sobn(\Omega,\Sigma_L)^\frac{q}{p-q}
\geq(\spq)^{\frac q{p-q}}\int_\Om \frac{f(x)^{\frac{p}{p-q}}}{(\rho(x)\int_{\mathbb{S}^1} |A(x)y\cdot y|^{\frac{1}{2}}\, d\nu_x(y) )^\frac{pq}{p-q}}dx
\end{equation}
where $\theta=\mu\otimes \nu_x$ is the slicing of $\theta$ and the function $\rho$ is the density of $\mu$ w.r.to the Lebesgue measure, see Definition~\ref{d.varifold}.

Now, to prove \eqref{toprove} fix a number $\eps>0$, an integer $k$ and a set $E\subset\Om$ which can be written as the interior of $\bigcup_j \overline{E_j}$, where the $E_j's$ are pairwise disjoint open squares of side-length $2^{-k}$, for some positive integer $k$. Note that one can choose $k$ arbitrarily large leaving $E$ unchanged (it suffices to decompose each $E_j$ into four equal squares of half the side-length, and repeat this procedure recursively).
For every $j$ consider the matrix $A_j:=A(x_j)$, where $x_j$ is the center of the square $E_j$:
since $A(x)$ is uniformly continuous, the conditions
\begin{equation}\label{ultima}
 |A(x)y\cdot y|  \leq \frac{1}{(1-\eps)}  |A_jy\cdot y| \qquad \forall x\in E_j, \, \forall
 y\in\mathbb{S}^1
\end{equation}
are satisfied as soon as $k$ is large enough (depending only on $\eps$). The finiteness of the
limit in \eqref{toprove} implies
that $\sobn(\Omega,\Sigma_L)\to 0$, and this in turn forces $\Sigma_L$ to converge to $\cOmega$ in the Hausdorff metric
(otherwise, a subsequence among the open sets $\Omega\setminus\Sigma_L$ would contain a ball
$B_r(x_L)$
of fixed
radius $r>0$, and by Remark~\ref{r.mon} we would
have $\sobn(\Omega,\Sigma_L)\geq \sobn(\Omega,\Sigma_L\cup \partial B_r(x_L))\geq  \sobn(B_r(x_L), \partial B_r(x_L))$: by \eqref{uniform}
this bound would be uniform in $L$, a contradiction). Since $\Sigma_L$ is a closed connected set, this also entails that (see, e.g., \cite{mostil})
\begin{equation}\label{intersection}
\lim_{L\to +\infty} \cH(\Si_L\cap E_j)=+\infty
\end{equation}
and moreover the set $\Sigma_L':=\bigcup_j(\Sigma_L\cap E_j)\cup \partial E_j$ 
is connected (if $L$ is large). 
Therefore, by Remark~\ref{r.mon} with $L\geq \cH(\Sigma_L)$ and \eqref{concomp1}, it follows that
\begin{equation*}
L^{\frac{pq}{p-q}}\sobn(\Omega,\Sigma_L)^\frac{q}{p-q}
\geq
\cH(\Sigma_L)^{\frac{pq}{p-q}}\sobn(E,\Sigma_L')^\frac{q}{p-q}
=
\cH(\Sigma_L)^{\frac{pq}{p-q}}\sum_{j}\sobn(E_j,\Si_L')^{\frac{q}{p-q}}
\end{equation*}
which, using \eqref{ultima} gives
\begin{equation}\label{dimliminf21}
L^{\frac{pq}{p-q}}\sobn(\Omega,\Sigma_L)^\frac{q}{p-q}
\geq (1-\varepsilon)
\sum_{j}(\inf_{E_j}f)^{\frac p{p-q}}{\cH(\Sigma_L)^{\frac{pq}{p-q}}}\sobn_j(E_j,\Si_L')^{\frac{q}{p-q}},
\end{equation}
where $C_j$ is the Poincar\'e-Sobolev constant \eqref{PS} associated to the constant coefficient matrix $A_j$ and density $f=1$, whenever $k$ is large enough (as we shall assume in the following). 

Inside each square $E_j$ by Theorem~\ref{t.lower} we have
\begin{equation}
\label{estb1}
\sobn_j(E_j,\Sigma_L')^{\frac{q}{p-q}}\geq (\spq)^{\frac{q}{p-q}}2^{\frac{pq}{p-q}+1} \frac{\ell_L^{\frac{p}{p-q}}\delta_L^{\frac{pq}{p-q}+1}}{(\ell_L+\pi \det A^{\frac12}\delta_L)^{\frac{q}{p-q}}},
\end{equation}
where $\ell_L$, according to \eqref{hausm}, is the Riemannian length
\begin{equation}
\label{deflL}
\ell_L=\int_{\Sigma_L'} |A_j\xi_L(x)\cdot \xi_L(x)|^\frac{1}{2}\,  d\cH(x),
\end{equation}
$\xi_L(x)$ being the unit normal to $\Sigma_L'$, while $\delta_L$, according to \eqref{delta}, is
\begin{equation}\label{deftL}
 \delta_L=\dfrac{|E_j|}{\ell_L +(\ell_L^2+\pi |E_j|\det {A_j}^{\frac{1}{2}})^\frac{1}{2}}.
\end{equation}
We shall let
$L\to+\infty$ in \eqref{dimliminf21} and use \eqref{estb1},  hence
we are interested in
the asymptotics (as $L\to+\infty$)
of both $\delta_L$ and $\ell_L$. As $A_j=A(x_j)$, \eqref{deflL} and \eqref{uniform}
give
\[
\ell_L\geq
\kappa_0^\frac{1}{2}\cH\bigl(\Sigma_L'\bigr)
>
\kappa_0^\frac{1}{2}\cH(\Sigma_L\cap E_j),
\]
so that $\ell_L\to+\infty$  by \eqref{intersection} and from \eqref{deftL} we find the asymptotics
\begin{equation}
\label{asyt}
 \delta_L
 \sim \dfrac{|E_j|}{2\ell_L }\quad\text{as $L\to+\infty$.}
 \end{equation}
Moreover, since the contribution of $\partial E_j$ to the integral in \eqref{deflL} is fixed,
while that of $\Sigma_L\cap E_j$ is dominant by \eqref{intersection}, it follows that
\begin{equation}
\label{asylL}
\ell_L\sim \int_{\Sigma_L\cap Q} |A_j \xi_{\Sigma_L}(x)\cdot \xi_{\Sigma_L}(x)|^{\frac{1}{2}}\,  d\cH(x)\quad
\text{as $L\to+\infty$.}
\end{equation}
Using \eqref{estb1} with \eqref{asyt}, we obtain
\begin{equation}\label{eq00}
\liminf_{L\to+\infty}\cH(\Sigma_L)^{\frac{pq}{p-q}}\sobn_j(\Si_L', E_j)^{\frac{q}{p-q}}\!\!\geq\! \!(\spq)^{\frac{q}{p-q}}\!\liminf_{L\to\infty}\left(
\frac{\cH(\Si_L)}{\ell_L}\right)^{\frac{pq}{p-q}}\!\!|E_j|^{\frac{pq}{p-q}+1}.
\end{equation}
By \eqref{asylL}, using \eqref{varifold} with 
$\varphi(x,y)=\eta_j(x) |A_j y\cdot y|^\frac12$ for all $x\in\cOmega$, $y\in\sfera$ and some  cutoff function $\eta_j\in C(\cOmega)$ such that $0\leq \eta_j\leq 1$ and $\eta_j\equiv 1$ over $\overline{E_j}$, we infer that
\[
\begin{split}
\limsup_{L\to+\infty}
\frac
{\ell_L} {\cH(\Si_L) }
&\leq
\lim_{L\to+\infty}
\frac
1 {\cH(\Si_L) }
\int_{\Sigma_L} \eta_j(x)|A_j \xi_{\Sigma_L}(x)\cdot \xi_{\Sigma_L}(x)|^{\frac12}\,  d\cH(x)\\
&=
\int_{\Ots} \eta_j(x)|A_j y\cdot y|^{\frac12}
\,d\theta.
\end{split}
\]
It is possible to let $\eta_j(x)\downarrow \chi_{\overline E_j}(x)$ pointwise in the last integral,
and obtain by dominated convergence
\begin{equation}\label{eq3}
\begin{split}
\limsup_{L\to+\infty}
\frac
{\ell_L} {\cH(\Si_L) }
&\leq
\int_{\overline E_j\times \sfera} |A_j y\cdot y|^\frac12
\,d\theta\\
&=
\int_{\overline E_j}\left( \int_{\sfera} |A_j y\cdot y|^\frac12\,d\nu_x(y)
\right)
\,d\mu(x)+\varepsilon |E_j|,
\end{split}
\end{equation}
where 
$\varepsilon |E_j|$ shall serve to avoid vanishing quantities at denominator (see below). 

Therefore, letting $L\to+\infty$ in \eqref{dimliminf21}, by combining \eqref{eq00} with \eqref{eq3},  we obtain
\begin{equation}\label{fine}
\lim_{L\to+\infty} F_L(\theta_L)^\frac{q}{p-q}=\lim_{L\to+\infty} L^{\frac{pq}{p-q}}\sobn(\Omega,\Sigma_L)^\frac{q}{p-q}\geq (1-\varepsilon) \int_E m_k(x) dx
\end{equation}
where $m_k$ is the piecewise constant function defined on $E$ as follows:
\[
m_k(x)=(c_{p,q})^\frac{q}{p-q} \sum_{j}(\inf_{E_j}f)^{\frac p{p-q}}\left(\frac{|E_j|}{\int_{\overline E_j}\int_{\sfera} |A_j y\cdot y|^\frac12\,d\nu_x(y)
\,d\mu(x)+\varepsilon|E_j|}\right)^{\frac{pq}{p-q}}\!\!\chi_{E_j}(x).
\]
Since by Radon-Nykodim theorem 
\[
\lim_{k\to\infty} m_k(x)=(\spq)^\frac{q}{p-q} {f(x)^{\frac p{p-q}}}\left(\frac{1}{\rho(x)\int_{\sfera} |A(x) y\cdot y|^\frac{1}{2}\,d\nu_x(y)+\varepsilon}\right)^{\frac{pq}{p-q}} \text{for a.e. $x\in E$,}
\]
letting $k\to\infty$ in \eqref{fine} we obtain from the Fatou lemma that
\[
\lim_{L\to+\infty} F_L(\theta_L)^\frac{q}{p-q}\geq (1-\varepsilon)(\spq)^\frac{q}{p-q} \int_E \frac{f(x)^{\frac{p}{p-q}}}{(\rho(x)\int_{\mathbb{S}^1} |A(x)y\cdot y|^\frac12\, d\nu_x(y)+\varepsilon )^\frac{pq}{p-q}}dx.
\]
Letting $E\uparrow \Omega$ and $\varepsilon\downarrow 0^+$ we finally get \eqref{toprove}, thanks to monotone convergence.
\end{proof}

\section{The $\Gamma$-limsup inequality}\label{sec4}

This section is devoted to proving that the $\Gamma$-limsup functional is
not greater than the functional $F_\infty$. We need to introduce the class of measures given in \cite{tilzuc2}.

\begin{definition}\label{d.step}
For $t>0$, let $\Qt$ denote the collection of all those open squares $Q_i\subset\R^2$, with side-length $t$
and corners
on the lattice $(t{\mathbb Z})^2$, such that $Q_i\cap \Omega\not=\emptyset$.
We say that a varifold  $\theta\in\vari$ is \emph{fitted to $\Qt$} if
it can be represented as
\begin{equation}\label{mufit}
\theta=\sum_{Q_i\in \Qt} \rho_i \chi_{\Omega\cap Q_i} \otimes \nu_i
\end{equation}
for suitable constants $\rho_i\geq 0$ and probability measures
$\nu_i$ over ${\mathbb S^1}$, satisfying
\begin{equation}\label{constants}
\sum_{Q_i\in\Qt} \rho_i|\Omega\cap Q_i| = 1.
\end{equation}
\end{definition}

\begin{proposition}[$\Gamma$-limsup inequality]\label{proptile2}
For every  varifold $\theta\in\vari$, there exists
a sequence of varifolds $\{\theta_L\}\subset\vari$ such that $\theta_L\weak\theta$ and,
moreover,
\begin{equation}\label{gammasup2}
  \limsup_{L\to \infty} F_L(\theta_L)\leq F_\infty(\theta).
\end{equation}
\end{proposition}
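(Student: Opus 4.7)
The plan is to construct a recovery sequence first for \emph{fitted} varifolds (Definition~\ref{d.step}), which form a dense subclass of $\vari$ on which $\Fp$ is continuous enough, and then obtain the general case by a standard diagonal argument. Throughout we may assume $\Fp(\theta)<+\infty$ (otherwise any sequence works), which forces the density $\rho$ of the first marginal of $\theta$ to be strictly positive a.e.\ on $\Omega$.

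For a fitted varifold $\theta=\sum_i \rho_i\chi_{\Omega\cap Q_i}\otimes\nu_i$ attached to a grid $\Qt$ (assume first every $Q_i\subset\Omega$; boundary squares are handled at the end), apply Proposition~\ref{p.tile} inside each $Q_i$ with the frozen matrix $A_i:=A(x_i)$ ($x_i$ the centre of $Q_i$), direction measure $\nu_i$, and target length $\ell_i:=L\rho_i|Q_i|$. This produces a closed and connected $\Sigma^i_L\subset\overline{Q_i}$ containing $\partial Q_i$, with $\cH(\Sigma^i_L)=\ell_i(1+o_L(1))$, varifolds converging to $|Q_i|^{-1}\chi_{Q_i}\otimes\nu_i$, and
\[
\ell_i^{\,p}\,C_i(Q_i,\Sigma^i_L)\leq \frac{\spq\,|Q_i|^{p+\frac{p}{q}-1}}{\bigl(\int_{\sfera}|A_iy\cdot y|^{\frac{1}{2}}d\nu_i(y)\bigr)^p}+o_L(1),
\]
where $C_i$ denotes the Poincar\'e--Sobolev constant \eqref{PS} with matrix $A_i$ and weight $1$. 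Set $\Sigma_L:=\bigcup_i\Sigma^i_L$: it is closed and connected since neighbouring tiles share a full boundary edge, has total length $L+O(1)=L(1+o_L(1))$ in view of \eqref{constants}, and its associated varifold converges weakly-$*$ to $\theta$.

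To estimate $\Flp(\theta_{\Sigma_L})=L^{\,p}\sobn(\Omega,\Sigma_L)$, iterate Proposition~\ref{p.disconnected} (applicable because $\Sigma_L\supset\bigcup_i\partial Q_i$) to obtain the \emph{exact} identity $\sobn(\Omega,\Sigma_L)^{q/(p-q)}=\sum_i\sobn(Q_i,\Sigma^i_L)^{q/(p-q)}$. Uniform continuity of $A$ and $f$ on $\overline\Omega$ provides, for any $\varepsilon>0$ and $t$ small enough, $A(x)\geq(1-\varepsilon)A_i$ and $f(x)\leq(1+\varepsilon)f(x_i)$ on $Q_i$; via \eqref{PS} this yields $\sobn(Q_i,\Sigma^i_L)\leq(1+C\varepsilon)\,f(x_i)^{p/q}\,C_i(Q_i,\Sigma^i_L)$. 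Inserting the bound above, using $(p/q-1)\,q/(p-q)=1$, multiplying by $L^{pq/(p-q)}$ and summing, the right-hand side becomes a Riemann sum for $(1+C\varepsilon)^{q/(p-q)}\Fp(\theta)^{q/(p-q)}$. Letting $L\to\infty$ and then $\varepsilon\downarrow 0$ proves \eqref{gammasup2} in the fitted case.

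For a general $\theta$ one approximates it by fitted varifolds $\theta^t$ obtained by averaging the disintegration of $\theta$ over each cell $\Omega\cap Q_i$ of $\Qt$: Lebesgue differentiation gives $\theta^t\weak\theta$, while continuity of $A$ and dominated convergence give $\Fp(\theta^t)\to\Fp(\theta)$. A standard diagonal extraction then produces the required recovery sequence. The squares $Q_i\not\subset\Omega$ are dealt with by a preliminary exhaustion of $\Omega$ with interior Lipschitz subdomains, the boundary strip producing a vanishing error. The main obstacle is the control performed in the evaluation step: one has to replace $A(x)$ and $f(x)$ by their frozen values on each cell without losing the sharp constant in Proposition~\ref{p.tile}, and this is possible precisely because that proposition is scale-invariant and Proposition~\ref{p.disconnected} furnishes an \emph{exact}—not merely sub-additive—decomposition of $\sobn$.
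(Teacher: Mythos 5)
Your construction for the squares compactly contained in $\Omega$, the use of Proposition~\ref{p.tile} with frozen coefficients $A_i$, $f(x_i)$, the exact decomposition of Proposition~\ref{p.disconnected}, and your Step 2 (density in energy of fitted varifolds plus a diagonal argument) all coincide with the paper's proof. The genuine gap is your treatment of the cells meeting $\partial\Omega$: you defer them to ``a preliminary exhaustion of $\Omega$ with interior Lipschitz subdomains, the boundary strip producing a vanishing error'', and this step fails. Because $q<p$, Proposition~\ref{p.disconnected} gives an \emph{exact} additive decomposition of $\sobn(\Omega,\Sigma_L)^{q/(p-q)}$ over the connected components of $\Omega\setminus\Sigma_L$; a boundary strip of fixed width whose only Dirichlet set is its boundary contributes to this sum a fixed positive constant, independent of $L$. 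Hence for every fixed exhaustion $L^{p}\sobn(\Omega,\Sigma_L)\to+\infty$, so the error does not ``vanish'' if you send $L\to\infty$ first and shrink the strip afterwards; making the strip width depend on $L$ would force you to redo the tiling, the length bookkeeping and the varifold convergence on $L$-dependent domains, which is not a routine remark but essentially the missing part of the proof.

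The paper resolves exactly this point differently: it tiles \emph{every} square $Q_i$ with $Q_i\cap\Omega\neq\emptyset$ (the frozen constant matrix $A_i$ makes $\sobn_i(Q_i,\cdot)$ meaningful even though $A(x)$ is defined only on $\overline\Omega$), defines $\Sigma_L=\bigcup_i\bigl(\Sigma^i_{t^2\rho_iL}\cap\Omega\bigr)\cup\partial\Omega$, imposes the hypothesis (H1) that no connected component of $\partial\Omega$ lies strictly inside a cell so that $\Sigma_L$ stays connected, localizes the weak-$*$ convergence of the tile varifolds to $\overline{\Omega\cap Q_i}\times{\mathbb S}^1$ (the limit measure does not charge $\partial(\Omega\cap Q_i)\times{\mathbb S}^1$), and then uses the monotonicity of Remark~\ref{r.mon} to bound $\sobn_i(\Omega\cap Q_i,\Sigma^i_{t^2\rho_iL})\leq\sobn_i(Q_i,\Sigma^i_{t^2\rho_iL})$ before invoking the estimate of Proposition~\ref{p.tile}. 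Note also that your claim that the union of the interior tiles is connected needs this kind of care (restricting to interior squares only, edge-connectedness of the family of cells is not automatic for a general Lipschitz $\Omega$), and that, as in the paper, one should observe that $\cH(\Sigma_L)\le L$ is only asymptotically true and must be fixed, e.g.\ as in the reference cited there. With the boundary cells handled as above, the rest of your argument is correct and is the paper's argument.
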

\begin{proof}
We divide the proof into two steps.

\smallskip
- \emph{Step 1: $\Gamma$-convergence for measures fitted to $\Qt$.}
In this step we also assume that $\theta\in\vari$ is a varifold \emph{fitted to $\Qt$}. Then we claim that there exists a sequence of continua
$\Sigma_L\in \mathcal A_L(\Omega)$ such that \eqref{varifold} holds true
%$\lim_{L\to\infty}{\cH(\Sigma_L)}/{L}=1$,
and
\begin{equation}\label{gammasup}
 \limsup_{L\to \infty}L^p\sobn(\Omega,\Sigma_L)\leq
F_\infty(\theta).
\end{equation}
We keep for $\theta$ the same notation as in Definition~\ref{d.step} and fix $\eta>1$.
By replacing $t$ with $t/2^n$ for some large $n\geq 1$ and relabelling the $\rho_i$s
(thus keeping $\nu$ fitted to $\Qt$), we may assume that $t$ is so small that the following two conditions hold:
\begin{itemize}
\item[(H1)] no connected component of $\partial\Omega$ is strictly contained in any square $Q_i\in\Qt$;
\item[(H2)] for every square $Q_i\in\Qt$, there exists a positive definite matrix $A_i$ and a constant $f_i>0$ such that for every $x\in \Omega\cap Q_i$ and $y\in\mathbb S_1$
\begin{equation}\label{assu2}
\frac{1}{\eta^{\frac{1}{p}}} | A_iy\cdot y| \leq |A(x) y\cdot y|\leq \eta^{\frac{1}{p}} | A_iy\cdot y| \  \text{ and } \ \frac{1}{\eta^\frac{q}{2p}} f_i \leq f(x) \leq \eta^\frac{q}{2p} f_i.
\end{equation}
\end{itemize}
Condition (H1) holds, for $t$ small enough, since $\partial \Omega$, being Lipschitz,
has finitely many connected components whose diameters have a positive lower bound.
On the other hand, (H2) is guaranteed, for small $t$, by the uniform continuity of the functions $A(x)$ and $f(x)$
(one can set, e.g.,  $A_i=A(x_i)$ and $f_i=f(x_i)$, for some $x_i\in \Omega\cap Q_i$).

The sets $\Sigma_L$ we want to construct will be obtained as a patchwork of sets $\Sigma^i_\ell$,
one for each square $Q_i\in\Qt$, obtained from Proposition~\ref{p.tile}. More precisely,
for every square $Q_i\in\Qt$ we apply Proposition~\ref{p.tile} to the square $Q_i$,
the measure $\nu_i$ and the matrix $A_i$. This yields, for
large enough $\ell$, sets $\Sigma^i_\ell$ with $\partial Q_i\subset\Sigma^i_\ell$ and $\cH(\Sigma^i_\ell)\sim\ell$ as $\ell\to\infty$ such that for every $\varphi\in C_\text{sym}(\overline{Q_i}\times {\mathbb S}^1)$
\begin{equation}
\label{varifoldsconverge4} \lim_{\ell\to\infty} \frac 1{\cH(\Sigma^i_\ell)}
\int_{\Sigma^i_\ell} \varphi(x,\xi^i_\ell(x))\,d\cH(x) = \frac
1 {|Q_i|}\int_{Q_i}\int_{{\mathbb
S}^1}\varphi(x,y)\,d\nu_i(y)\,dx
\end{equation}
where $\xi^i_\ell$ is any measurable selection of the unit normal to
$\Sigma^i_\ell$,
and moreover,
\begin{equation}\label{ii}
\displaystyle \limsup_{\ell\to \infty} \ell^p\sobn_i(Q_i,\Sigma_\ell^i)
\leq \frac{\spq (f_i)^{\frac{p}{q}} |Q_i|^{{p}+\frac{p}{q}-1}}{\big(\int_{\mathbb{S}^1}
|A_iy\cdot y|^{\frac12}\, d\nu_i(y) \big)^p}
\end{equation}
where $C_i$ is the Poincar\'e-Sobolev constant associated to the constant coefficients $A_i$ and $f_i$.

Observe that the domain $\Omega$ plays no role in this construction, and this is natural
for those squares $Q_i\in\Qt$ such that $Q_i\subset\Omega$. If, however, $Q_i\in\Qt$
is such that $Q_i\cap\partial\Omega\not=\emptyset$, the weak-* convergence \eqref{varifoldsconverge4}
(that occurs in the whole $\overline{Q_i}\times {\mathbb S}^1$)
can still
be \emph{localized} to $\overline{\Omega\cap Q_i}\times {\mathbb S}^1$. More precisely,
since $\partial(\Omega\cap Q_i)$ has null Lebesgue measure, and
the limit measure $|Q_i|^{-1}\chi_{Q_i}\otimes \nu_i$ does not
charge the cylinder $\partial(\Omega\cap Q_i)\times {\mathbb S}^1$, from \eqref{varifoldsconverge4}
we infer that for every $\varphi\in C_\text{sym}(\overline{\Omega\cap Q_i}\times {\mathbb S}^1)$
\begin{equation*}
\lim_{\ell\to\infty} \frac 1{\cH(\Sigma^i_\ell)}
\!\int_{\overline{\Omega}\cap \Sigma^i_\ell} \varphi(x,\xi^i_\ell(x))d\cH(x) \!=\! \frac
1 {|Q_i|}\int_{\Omega\cap Q_i}\!\int_{{\mathbb
S}^1}\!\varphi(x,y)d\nu_i(y)dx,
\end{equation*}
(in the first integral, a localization to $\overline{Q_i}$ is implicit
since $\Sigma^i_\ell\subset \overline{Q_i}$ by assumption).

In \eqref{mufit} we may assume that $\rho_i>0$ for all $i$, since if $\rho_i=0$ for some
$i$ then the right-hand side of \eqref{gammasup} is $+\infty$ and the inequality is trivial. Then, for large $L$
we can define the sets 
$
\Sigma_L:= \bigcup_{Q_i\in \Qt} (\Sigma^i_{t^2\rho_i L}\cap \Omega)\cup\partial\Omega,
$
where $\Sigma^i_{t^2\rho_iL}$ denotes the set $\Sigma^i_{\ell}$ defined above, when $\ell=t^2\rho_i L$
(recall that $t^2=|Q_i|$ is the area of $Q_i$). Actually, the sets $\Sigma_L$ just constructed are not necessarily in $\mathcal A_L(\Omega)$ (because we do not know wheter $\cH(\Sigma_L)\leq L$ for every $L$ sufficiently large) but only asymptotically equivalent to $L$, namely $\lim_{L\to \infty}{\cH(\Sigma_L)}/L=1$. Anyhow, this drawback can easily be removed as in \cite[Lemma~4.4 and Remark~4.5]{tilzuc2}. 
Now, to prove \eqref{gammasup}, observe that every connected component
of $\Omega\setminus \Sigma_L$ is contained inside some square $Q_j\in\Qt$ and
moreover, by construction, $\Sigma_L\supset \partial(\Omega\cap Q_i)$ for \emph{every}
square $Q_i\in\Qt$. As a consequence, Proposition~\ref{p.disconnected} applies and we have
\footnote{The last two inequalities follow from \eqref{assu2} and Remark~\ref{r.mon}, respectively: observe that, since
$A_i$ is a constant matrix and $b_i$ is a constant, the Poincar\'e-Sobolev constant 
$\sobn_{i}(Q_i,\Sigma^i_{t^2\rho_i L})$
makes sense for every $Q_i$, while $\sobn(Q_i,\Sigma^i_{t^2\rho_i L})$ would make
no sense if $Q_i\cap\partial\Omega\not=\emptyset$, since $A(x)$ is defined
only for $x\in\overline{\Omega}$.}
\[
\begin{split}
\sobn(\Omega,\Sigma_L)^\frac{q}{p-q}&=
\sum_{Q_i\in\Qt}
\sobn(\Omega\cap Q_i,\Sigma_L)^\frac{q}{p-q}
= \sum_{Q_i\in\Qt} \sobn(\Omega\cap Q_i,\Sigma^i_{t^2\rho_i L})^\frac{q}{p-q}\\
&\leq  \eta^\frac{q}{p-q} \sum_{Q_i\in\Qt}
\sobn_i(\Omega\cap Q_i,\Sigma^i_{t^2\rho_i L})^\frac{q}{p-q}
\leq \eta^\frac{q}{p-q} \sum_{Q_i\in\Qt}
\sobn_i(Q_i,\Sigma^i_{t^2\rho_i L})^\frac{q}{p-q}.
\end{split}
\]
Then, swapping $\limsup$ and the sum,
and using \eqref{ii}
with $\ell=t^2\rho_i L$, we obtain
\[
\begin{split}
\limsup_{L\to\infty} L^\frac{pq}{p-q}\sobn(\Omega,\Sigma_L)^\frac{q}{p-q}
&\leq
\eta^\frac{q}{p-q}
\limsup_{L\to\infty}
\sum_{Q_i\in\Qt}
L^\frac{pq}{p-q}\sobn_i(Q_i,\Sigma^i_{t^2\rho_i L})^\frac{q}{p-q}\\
&\leq
\eta^\frac{q}{p-q}
\sum_{Q_i\in\Qt}
\limsup_{L\to\infty}
L^\frac{pq}{p-q}\sobn_i(Q_i,\Sigma^i_{t^2\rho_i L})^\frac{q}{p-q}\\
&\leq 
 (\eta \cdot c_{p,q})^\frac{q}{p-q}
\sum_{Q_i\in\Qt}
\frac{(f_i)^{\frac{p}{p-q}}|Q|^{\frac{pq}{p-q}+1}}{\big(t^2\rho_i\int_{\mathbb{S}^1}
|A_i y\cdot y|^\frac12\, d\nu_i(y) \big)^\frac{pq}{p-q}}\\
&\leq 
 (\eta \cdot c_{p,q})^\frac{q}{p-q}
\sum_{Q_i\in\Qt}
\frac{(f_i)^{\frac{p}{p-q}}|Q|}{\big(\rho_i\int_{\mathbb{S}^1}
|A_i y\cdot y|^\frac12\, d\nu_i(y) \big)^\frac{pq}{p-q}}.
%\leq\eta F_\infty(\theta)^{\frac{q}{p-q}},
\end{split}
\]
The claim \eqref{gammasup} then follows from \eqref{assu2}, up to a multiplicative factor depending on $\eta>1$, as $\theta$ is as in \eqref{mufit} and the functional $F_\infty$ defined in \eqref{gammalimit} takes the form
\begin{equation*}
\Fp(\theta)^{\frac{q}{p-q}}=
(\spq)^{\frac{q}{p-q}}\sum_{Q_i\in\Qt}\int_{\Om\cap Q_i} \frac{f(x)^{\frac{p}{p-q}}}{\big(\rho_i\int_{\mathbb{S}^1} |A(x)y\cdot y|^{\frac{1}{2}}\, d\nu_i(y)\big)^{\frac{pq}{p-q}}}dx.
\end{equation*}
The constant $\eta$ can be remove by a diagonal argument (see again Step~3 in \cite[Proposition~4.1]{tilzuc2}). 

\smallskip

- \emph{Step 2: density in energy in the class of measures fitted to $\Qt$.}
The passage to a generic varifold $\theta$ is a standard argument in $\Gamma$-convergence theory (see \cite{dalmaso}). It suffices to prove that %the class of varifolds fitted to $\Qt$ (for some $t>0$) is \emph{dense in energy}, i.e., 
for every varifold $\theta\in\vari$ there exist varifolds $\theta_L\in\vari$, each fitted to $\Qt$ for some $t>0$ that may
depend on $L$, such that
\begin{equation}
\label{weak1}
\theta_L\weak \theta \quad\text{in $\vari$, as $L\to\infty$}
\end{equation}
and, at the same time,
\begin{equation}
\label{densen}
\limsup_{L\to\infty} F_\infty(\theta_L)\leq F_\infty(\theta).
\end{equation}
The approximations are as in \cite{tilzuc2}. Consider any family of Borel sets $\Omega_i$ such that $
\overline{\Omega}=\bigcup_i \Omega_i$, 
$\Omega\cap Q_i\subseteq\Omega_i \subseteq
\overline{\Omega\cap Q_i}$, and $
\Omega_i\cap\Omega_j=\emptyset$, for all $i\not=j$,
and define
\begin{equation*}
\theta_L:=\sum_{Q_i\in \Qt} \rho_i \chi_{\Omega\cap Q_i} \otimes \nu_i,
\end{equation*}
where the constants $\rho_i$ and the probability measures $\nu_i$ are given by
\[
\rho_i:= \frac{\theta(\Omega_i \times {\mathbb S}^1)}{|\Omega_i|}
\quad
\text{and}
\quad \nu_i(E):=
\frac{\theta(\Omega_i\times E)}
{\theta(\Omega_i \times {\mathbb S}^1)}
\quad
\text{(for every Borel set $E\subseteq {\mathbb S}^1$).}
\] 
We only focus on \eqref{densen} (assuming all $\rho_i>0$, otherwise
\eqref{densen} is trivial as already observed). Let $\theta=\mu\otimes \nu_x$ be the disintegration of $\theta$ (as discussed in Definition~\ref{d.varifold}),
and let $\rho\in L^1(\Omega)$ be the density of $\mu$ with respect to the Lebesgue measure
(clearly, $\mu\geq \rho$ as measures).
For every $Q_i\in\Qt$, and for every $x\in \Omega\cap Q_i$, we have
\[
\begin{split}
\rho_i \int_{\mathbb{S}^1}
|A(x)y\cdot y|^\frac{1}{2}\, d\nu_i(y)
=&
\frac1 {|\Omega_i|}
\int_{\Omega_i\times {\mathbb S}^1}
|A(x)y\cdot y|^\frac{1}{2}  \,d\theta(z,y)
\\
=&\frac1 {|\Omega_i|}
\int_{\Omega_i} \left(\int_{{\mathbb S}^1}
|A(x)y\cdot y|^\frac{1}{2}
\,d\nu_z(y)\right)
d\mu(z)\\
\geq
&\frac 1{|\Omega_i|}
\int_{\Omega_i} \left(\int_{{\mathbb S}^1}
|A(x)y\cdot y|^\frac{1}{2}
\,d\nu_z(y)\right) \rho(z)\,dz.
\end{split}
\]
Now, for every $\eta>1$, since the matrices $A(x)$ are uniformly continuous and positive
definite over $\overline{\Omega}$, if $L$ is large enough (and consequently
the side length $t=1/L$ of every $Q_j\in\Qt$ is small enough) we have
\[
x,z\in \Omega_i\Rightarrow
|A(x)y\cdot y|^\frac{1}{2}
\geq \frac{1}{\eta}|A(z)y\cdot y|^\frac{1}{2}, \quad f(x)\leq \eta^q f(z), \quad
\forall y\in{\mathbb S}^1.
\]
This allows us to replace, up to a factor $\eta$, $A(x)$ with $A(z)$ in the previous estimate:
thus we obtain
\[
\frac{f(x)^{\frac{1}{q}}}{\rho_i \int_{\mathbb{S}^1}
|A(x)y\cdot y|^\frac{1}{2}\, d\nu_i(y)}
\leq\eta^2
\frac{f(z)^{\frac{1}{q}}}{\frac {1} {|\Omega_i|}
\int_{\Omega_i}  \rho(z)\big(\int_{{\mathbb S}^1}
|A(z)y\cdot y|^\frac{1}{2}
d\nu_z(y)\big)
dz}
\]
valid for $L$ large enough (depending only on $\eps$).
Taking the powers with $pq/(p-q)$ and letting $L\to \infty$, by Radon-Nykodim theorem, we obtain that
\[
\limsup_{L\to+\infty}\frac{f(x)^{\frac{p}{p-q}}}{\big(\rho_i \int_{\mathbb{S}^1}
|A(x)y\cdot y|^\frac{1}{2}\, d\nu_i(y)\big)^\frac{pq}{p-q}}
\leq \eta^{\frac{2pq}{p-q}} \frac{f(z)^{\frac{p}{p-q}}}{\big(\rho(z)\int_{{\mathbb S}^1} |A(z)y\cdot y|^\frac{1}{2} d\nu_z(y) \, dz\big)^\frac{pq}{p-q}}
\]
for a.e. $z\in \Omega$. Finally, by integration over $\Omega$, the finiteness of $F_\infty(\mu)$ and dominated convergence one obtains \eqref{gammasup2} (actually after taking powers with $q/(p-q)$ and multiplying  with $c_{p,q}$)
up to a multiplicative factor depending on  $\eta$. By the arbitrariness of $\eta$ the $\Gamma$-limsup inequality is then proved.
\end{proof}

\section{The case $q\geq p$.}\label{sec5}
 
The Poincar\'e-Sobolev inequality \eqref{sobolev} holds for other pairs of exponents $p, q$: in fact given $1< p< +\infty$ the exponent $q$ may be chosen up to the \emph{conjugate exponent} $p^*$ of $p$, defined, when $p<2$, as $p^*:=2p/(2-p )$ and, when $p\geq 2$, as $p^*:=+\infty$. Then one may wonder whether the results proved in this paper can be further extended to the case $q\geq p$.
The main difference with the sub-homogeneous case lies in the fact that, when $q\geq p$, the constant $C(\Omega,\Sigma)$ is no longer local, i.e., 
\begin{equation}\label{ff}
\text{if }\Sigma=\partial \Omega_1\cup \partial\Omega_2 \text{ then } C(\Omega_1\cup\Omega_2)=\max\{C(\Omega_1),C(\Omega_2)\}
\end{equation}
(cf. with Proposition~\ref{p.disconnected}). 
However, Remarks~\ref{r.mon} and \ref{r.scaling} and also Theorem~\ref{t.lower} remain true, also for $q\geq p$. We have to distinguish the cases $p=q$ and $q> p$.

In the \emph{homogeneous case} $p=q$ the explicit formula given in Remark~\ref{rem.spq} for Poincar\'e-Sobolev constants with mixed boundary conditions is still true; this implies the validity of Theorem~\ref{t.thin}. Consequently, with only minor changes  
from the linear case $p=q=2$, previously treated in \cite{tilzuc1,tilzuc2} (see also \cite{henzuc,tilzuc3} for other related results), one can prove the following result. 

\begin{theorem}[$\Gamma$-convergence in the homogeneous case]
As $L\to +\infty$ the functionals $\Flp$ defined in \eqref{functional} $\Gamma$-converge, with respect to the weak-* topology on $\vari$, to the following functional
\begin{equation}\label{ff2}
c_{p,p}
\esssup_{x\in\Omega}\frac{f(x)}{\Big(\rho(x)\int_{\mathbb{S}^1} |A(x)y\cdot y|^\frac{1}{2}\, d\nu_x(y)\Big)^p}.
\end{equation}
\end{theorem}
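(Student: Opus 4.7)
The plan is to imitate the proof of Theorem~\ref{t.main}, replacing every place where the sub-additivity/super-additivity of $\tau\mapsto \tau^{q/(p-q)}$ was used (and hence Proposition~\ref{p.disconnected}) by the max-rule~\eqref{ff}. The resulting $L^{p/(p-q)}$-type sums will collapse into $L^\infty$-type quantities, which explains why the $\Gamma$-limit \eqref{ff2} is an essential supremum. Throughout, Theorem~\ref{t.lower} and Theorem~\ref{t.thin} remain available (as the paper notes, Theorem~\ref{t.thin} survives in the homogeneous case because the explicit formula of Remark~\ref{rem.spq} still holds).

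For the $\Gamma$-liminf, I fix a sequence $\theta_L=\theta_{\Sigma_L}\weak\theta$ with $\liminf_L F_L(\theta_L)<\infty$. I partition $\Omega$ (up to a small error set) into dyadic squares $E_j$ of side $2^{-k}$, freeze $A_j=A(x_j)$, $f_j=f(x_j)$ at the centers, and define the enlarged Dirichlet set $\Sigma_L':=\bigcup_j(\Sigma_L\cap E_j)\cup\partial E_j$. By the Hausdorff convergence of $\Sigma_L$ to $\cOmega$ (exactly as in the proof of the $\Gamma$-liminf in Section~\ref{sec3}), monotonicity (Remark~\ref{r.mon}) and the max-rule \eqref{ff} yield
\[
L^p\sobn(\Omega,\Sigma_L)\geq \max_j (1-\eps)\,f_j\,\cH(\Sigma_L)^p\sobn_j(E_j,\Sigma_L')
\]
where $\sobn_j$ refers to the constant-coefficient matrix $A_j$ with $f=1$. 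Applying Theorem~\ref{t.lower} and the asymptotics $\delta_L\sim |E_j|/(2\ell_L)$ and $\ell_L\sim \int_{\Sigma_L\cap E_j}|A_j\xi_{\Sigma_L}\cdot \xi_{\Sigma_L}|^{1/2}d\cH$, and then passing to the limit using the weak-* convergence of varifolds (with a cutoff $\eta_j\downarrow\chi_{\overline{E_j}}$ as in the sub-homogeneous case), I obtain
\[
\liminf_{L\to\infty} L^p\sobn(\Omega,\Sigma_L)\geq (1-\eps)c_{p,p}\max_j \frac{f_j}{\bigl(\rho_k^j\int_{\sfera}|A_jy\cdot y|^{1/2}d\nu^j(y)\bigr)^p},
\]
where $\rho_k^j$ and $\nu^j$ are the piecewise-constant approximations of $\rho$ and $\nu_x$ on $E_j$ obtained by averaging $\theta$. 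Letting $k\to\infty$ the finite max above converges to the essential supremum of the integrand, and then $\eps\downarrow 0$ gives the desired lower bound.

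For the $\Gamma$-limsup, I first handle varifolds $\theta$ fitted to $\Qt$, as in Step~1 of the proof of Proposition~\ref{proptile2}. I run the same patchwork construction $\Sigma_L:=\bigcup_{Q_i\in\Qt}(\Sigma^i_{t^2\rho_i L}\cap\Omega)\cup\partial\Omega$ obtained from Proposition~\ref{p.tile}, with each $\Sigma^i_\ell$ designed so that $\ell^p\sobn_i(Q_i,\Sigma^i_\ell)\to c_{p,p}|Q_i|^{2p-1}\bigl(\int_{\sfera}|A_iy\cdot y|^{1/2}d\nu_i\bigr)^{-p}$. Using \eqref{ff} in place of Proposition~\ref{p.disconnected}, the max-rule gives
\[
L^p\sobn(\Omega,\Sigma_L)\leq \eta\,\max_{Q_i\in\Qt} L^p f_i\sobn_i(Q_i,\Sigma^i_{t^2\rho_i L}),
\]
so taking $\limsup_L$ and replacing $\ell$ by $t^2\rho_i L$ in the estimate for $\Sigma^i_\ell$ yields
\[
\limsup_{L\to\infty}L^p\sobn(\Omega,\Sigma_L)\leq \eta\, c_{p,p}\max_{Q_i\in\Qt} \frac{f_i}{\bigl(\rho_i\int_{\sfera}|A_iy\cdot y|^{1/2}d\nu_i(y)\bigr)^p},
\]
which is exactly $F_\infty(\theta)$ on fitted varifolds (after $\eta\downarrow 1$ by a standard diagonal argument), since for piecewise-constant data the essential supremum is precisely this finite max. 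The extension to a general $\theta\in\vari$ proceeds as in Step~2 of Proposition~\ref{proptile2}, via fitted approximants $\theta_L$ built by averaging $\theta$ on the cells of $\Qt$ with $t=1/L$, combined with the continuity of $A$ and $f$; the inequality $F_\infty(\theta_L)\leq \eta F_\infty(\theta)+o(1)$ in the essential-supremum sense follows from Lebesgue differentiation for $\mu$ w.r.to the Lebesgue measure.

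The main obstacle, and the reason the statement differs from Theorem~\ref{t.main}, is the loss of locality expressed in \eqref{ff}: the integral decomposition of the $\Gamma$-limit is replaced by a max, so in both the liminf and the limsup one has to handle carefully the passage \emph{finite max on a dyadic partition $\rightsquigarrow$ essential supremum}. This is not entirely routine because the sup is sensitive to small sets (unlike the integrals in Section~\ref{sec3}--\ref{sec4}), so every approximation step (freezing coefficients, averaging of $\theta$, weak-* cutoffs) must be controlled \emph{uniformly in $x$} rather than in $L^1$-average, which is why Radon--Nikodym must be replaced by Lebesgue differentiation of $\mu$.
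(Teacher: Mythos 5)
Your argument is essentially the route the paper itself intends (the paper only sketches it, deferring to the linear case $p=q=2$ of \cite{tilzuc1,tilzuc2}): localize on dyadic or fitted cells, replace the additivity of Proposition~\ref{p.disconnected} by the max-rule \eqref{ff} everywhere it is used (including inside the tile estimate of Proposition~\ref{p.tile}, whose proof must be rerun with the max-rule since as stated it relies on \eqref{concomp1}), and convert the finite maxima into an essential supremum by pointwise a.e.\ differentiation of $\mu$. The only slip is the intermediate per-cell rate: for $q=p$ the analogue of Proposition~\ref{p.tile} gives $\ell^p C_i(Q_i,\Sigma^i_\ell)\to c_{p,p}\,|Q_i|^{p+\frac{p}{q}-1}\big(\int_{\mathbb{S}^1}|A_iy\cdot y|^{\frac12}\,d\nu_i\big)^{-p}=c_{p,p}\,|Q_i|^{p}\big(\int_{\mathbb{S}^1}|A_iy\cdot y|^{\frac12}\,d\nu_i\big)^{-p}$, not $|Q_i|^{2p-1}(\cdots)^{-p}$, and it is this corrected exponent that is consistent with (and needed for) your final bound $\max_i f_i\big(\rho_i\int_{\mathbb{S}^1}|A_iy\cdot y|^{\frac12}\,d\nu_i\big)^{-p}$.
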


Instead, in the \emph{super-homogeneous case} $q>p$, the situation extremely changes and an extra difficulty emerges: the minimum value of \eqref{problem} stops to decay as $1/L^p$ (and the lower bound provided in Theorem~\ref{t.lower} is no longer sharp). In fact, one can prove that the minimum value decreases as the rescaling power $1/L^{p+2(\frac{p}{q}-1)}$ as $L\to+\infty$. Moreover, also an estimate as in Remark~\ref{rmkhkhkj} can not be true for $q>p$. It is sufficient to take $E=(0,1)\times (-L,L)$, and then let $L$ go to $+\infty$, by observing that 
$$C(E_L)\to C\left((0,1)\times\mathbb R\right)>0 \qquad\text{and}\qquad |E_L|^{\frac{p}{q}-1}\to 0.$$
In the end, an estimate as the one in Remark~\ref{rmkhkhkj} is the deep reason at the basis of the different scaling and limiting behaviour of these Poincar\'e-Sobolev constants.

In view of these considerations we have the following.

\begin{openproblem}[$\Gamma$-convergence in the super-homogeneous case]
Find the $\Gamma$-limit of the functional $G_L\colon \vari\to [0,\infty]$ defined by
\begin{equation*}%\label{functional}
G_L(\theta)=
\begin{cases}
{L^{p+2(\frac{p}{q}-1)}}\,\sobn(\Omega,\Sigma) \quad &\text{if $\theta=\theta_\Si$ as in \eqref{concrete} for some $\Si\in\mathcal A_L(\Omega)$,}\\[2mm]
+\infty  &\text{otherwise,}
\end{cases}
\end{equation*}  
and characterize the asymptotically optimal configurations.
\end{openproblem}

In view of \eqref{ff} we still expect a $\Gamma$-limit with a supremal structure as in \eqref{ff2} and comb-shaped configurations to be asymptotically optimal. Anyhow, a proof for this evidences seems quite challenging.

\subsection*{Acknowledgements}
An anonymous referee who carefully read the paper and pointed out interesting issues is kindly acknowledged.

This work is part of the Research Project INdAM for Young Researchers (Starting Grant) \emph{Optimal Shapes in Boundary Value Problems} and of the INdAM - GNAMPA Project 2018 \emph{Ottimizzazione Geometrica e Spettrale}.

%This work was partially supported by the INdAM-GNAMPA project 2018 \emph{Ottimizzazione Geometrica e Spettrale}.


\begin{thebibliography}{lll}
\providecommand{\natexlab}[1]{#1}
\providecommand{\url}[1]{\texttt{#1}}
\expandafter\ifx\csname urlstyle\endcsname\relax
  \providecommand{\doi}[1]{doi: #1}\else
  \providecommand{\doi}{doi: \begingroup \urlstyle{rm}\Url}\fi
\bibliographystyle{plain}

\bibitem{alesig}
{\sc G. Alessandrini and M. Sigalotti}.
\emph{Geometric properties of solutions to the anisotropic p-Laplace equation in dimension two}.
Ann. Acad. Sci. Fenn. Math. 26 (2001), 249--266. 

\bibitem{alm}
{\sc F.J.~Almgren}.
 \emph{The theory of varifolds: A variational calculus in the large for the k-dimensional area integrand}.
Princeton: Princeton University Library,  1965.

\bibitem{amfupa}{\sc L. Ambrosio, N. Fusco, and D. Pallara}. \emph{Functions of bounded variation and free discontinuity problems}. Oxford Mathematical Monographs, The Clarendon Press, Oxford University Press, New York, 2000.


\bibitem{amgisa}
{\sc L.~Ambrosio,  N.~Gigli, and G.~Savar\'e}.
 \emph{Gradient flows in metric spaces and in the space of probability measures}.
Second Edition, Lectures in Mathematics ETH Z\"urich, Birkh\"auser Verlag, Basel, 2008.

\bibitem{ambtil}
{\sc L.~Ambrosio and P.~Tilli}.
\emph{Topics on analysis in metric spaces}.
Oxford Lecture Series in Mathematics and its Applications, 25, Oxford University Press, Oxford,  2004.

\bibitem{banwar}
{\sc C. Bandle and A. Wagner}.
{\em Sobolev constants in disconnected domains}.
Mathematical analysis and applications, 12--19, AIP Conf. Proc., 835, Amer. Inst. Phys., Melville, NY, 2006.

\bibitem{bra1}
{\sc L.~Brasco}.
{\em On torsional rigidity and principal frequencies: an invitation to the Kohler-Jobin rearrangement technique}.
ESAIM Control Optim. Calc. Var. 20 (2014), 315--338. 

\bibitem{bra2}
{\sc L.~Brasco}.
{\em On principal frequencies and isoperimetric ratios in convex sets}.
Ann. Fac. Sci. Toulouse Math.
\url{https://arxiv.org/pdf/1806.08947.pdf}

\bibitem{bra3}
{\sc L.~Brasco}.
{\em On principal frequencies and inradius in convex sets}.
Bruno Pini Math. Anal. Semin. 9 (2018),  78--101.
%\url{https://arxiv.org/pdf/1808.09684.pdf}

\bibitem{brafra}
{\sc L.~Brasco and G.~Franzina}.
{\em A pathological example in Nonlinear Spectral Theory}.
Adv. Nonlinear Anal. 8 (2019), 707--714.


\bibitem{buoust}
{\sc G.~Buttazzo, E.~Oudet, and E.~Stepanov}.
{\em Optimal transportation problems with free {D}irichlet regions}.
In Variational Methods for Discontinuous Structures,
  Birkh{\"a}user, Basel, 41--65, 2002.

\bibitem{butsan}
{\sc G.~Buttazzo and F.~Santambrogio}.
{\em Asymptotical compliance optimization for connected networks}.
Netw. Heterog. Media 2 (2007), 761--777.

\bibitem{butste}
{\sc G.~Buttazzo and E.~Stepanov}.
{\em Optimal transportation networks as free {D}irichlet regions for the
  {M}onge-{K}antorovich problem}.
Ann. Sc. Norm. Super. Pisa Cl. Sci. 2 (2003), 631--678.


\bibitem{dalmaso}
{\sc G.~Dal Maso}.
{\em An introduction to {$\Gamma$}-convergence}.
Progress in Nonlinear Differential Equations and their Applications,
  8,  Birkh{\"a}user, Boston, 1993.

\bibitem{draman}
{\sc P.~Dr\'abek and R.~Man\'asevich}.
{\em On the closed solution to some nonhomogeneous eigenvalue problems with p-Laplacian.}
Differential Integral Equations 12 (1999), 773--788. 

%\bibitem{evagar}
%{\sc L.~C. Evans and R.~F. Gariepy}.
%\emph{Measure theory and fine properties of functions}.
%Studies in Advanced Mathematics, CRC Press, Boca Raton, 1992.

\bibitem{fralam}
{\sc G. ~Franzina and P.D.~Lamberti}.
\emph{Existence and uniqueness for a p-Laplacian nonlinear eigenvalue problem}. 
Electron. J. Differential Equations 26 (2010), 1--10.

%\bibitem{hakrku}
%{\sc E.~M. Harrell, II, P.~Kr{{\"o}}ger, and K.~Kurata},
%{\em On the placement of an obstacle or a well so as to optimize the
  %fundamental eigenvalue},
%SIAM J. Math. Anal., 33 (2001), pp.~240--259.

%\bibitem{hen}
%{\sc A.~Henrot},
%\emph{Extremum problems for eigenvalues of elliptic operators},
%Frontiers in Mathematics, Birkh{\"a}user, Basel, 2006.



\bibitem{henzuc}
{\sc A.~Henrot and D.~Zucco}.
\emph{Optimization of the first Dirichlet eigenvalue of the Laplacian with an obstacle}. 
Ann. Scuola Norm. Sup. Pisa Cl. Sci., {\footnotesize DOI: {10.2422/2036-2145.201702\_003}}.

\bibitem{leme}
{\sc A.~Lemenant}.
{\em A presentation of the average distance minimizing problem}.
Zap. Nauchn. Sem. S.-Peterburg. Otdel. Mat. Inst. Steklov. (POMI),
390 (2011), Teoriya Predstavlenii, Dinamicheskie Sistemy, Kombinatornye Metody,
 XX, pp.~117--146. Translation
 in J. Math. Sci. (N.~Y.) 181 (2012), 820--836.

\bibitem{maz}
{\sc V.G. Maz'ja}.
{\em Sobolev spaces}.
Translated from the Russian by T. O. Shaposhnikova. Springer Series in Soviet Mathematics. Springer-Verlag, Berlin, 1985.

\bibitem{mazzuc}
{\sc D. Mazzoleni and D. Zucco.}
{\em Convex combinations of low eigenvalues, Fraenkel asymmetries and attainable sets}. 
ESAIM Control Optim. Calc. Var. 23 (2017), 869--887.

\bibitem{mipast}
{\sc M.~Miranda, Jr., E.~Paolini, and E.~Stepanov}.
{\em On one-dimensional continua uniformly approximating planar sets}.
Calc. Var. Partial Differential Equations 27 (2006), 287--309.

\bibitem{mostil}
{\sc S.J.N. Mosconi and P.~Tilli}.
{\em {$\Gamma$}-convergence for the irrigation problem}.
J. Convex Anal. 12 (2005), 145--158.

\bibitem{nay1}
{\sc A.~Nayam}.
{\em Constant in two-dimensional $p$-compliance-network problem}.
Netw. Heterog. Media 9 (2014), 161--168.

\bibitem{nay2}
{\sc A.~Nayam}.
{\em On some asymptotical shape problems}.
Asymptot. Anal. 83 (2013), 285--302.


\bibitem{sim}
{\sc L.~{S}imon}.
{\em  Lectures on geometric measure theory}.
Proceedings of the Centre for Mathematical Analysis, Australian National University, Centre for Mathematical Analysis, Canberra, 1983.


\bibitem{til}
{\sc P.~Tilli}.
{\em Compliance estimates for two-dimensional problems with {D}irichlet
  region of prescribed length}.
Netw. Heterog. Media 7 (2012), 127--136.

\bibitem{tilzuc1}
{\sc P.~Tilli and D.~Zucco}.
{\em Asymptotics of the first Laplace eigenvalue with Dirichlet regions of prescribed length}.
SIAM J. Math. Anal. 45 (2013), 3266--3282.

\bibitem{tilzuc3}
{\sc P.~Tilli and D.~Zucco}.
{\em Spectral partitions for Sturm-Liouville problems}.
Proc. Roy. Soc. Edinburgh Sect. A (2019).
{\footnotesize DOI: 10.1017/prm.2019.1}

\bibitem{tilzuc2}
{\sc P.~Tilli and D.~Zucco}.
{\em Where best to place a Dirichlet condition in an anisotropic membrane?}
SIAM J. Math. Anal. 47 (2015), 2699--2721. 


\bibitem{zie}
{\sc W.P. Ziemer}.
{\em Weakly differentiable functions. Sobolev spaces and functions of bounded variation}.
Graduate Texts in Mathematics, 120. Springer-Verlag, New York, 1989.

\end{thebibliography}
\end{document}